%
%
%
%
%
\RequirePackage{fix-cm}
\documentclass[final]{svjour3}                     
\smartqed  
\usepackage{graphicx}
\usepackage{amsmath}%
\usepackage{amstext}%
\usepackage{array}
\usepackage{multirow}
\newcolumntype{C}[1]{>{\centering\arraybackslash}p{#1}}
\newcolumntype{P}[1]{>{\centering\arraybackslash}p{#1}}
\newcolumntype{M}[1]{>{\centering\arraybackslash}m{#1}}
\usepackage{amssymb}%
\usepackage{amsxtra}
\usepackage{epsfig}%
\usepackage{cite}
\usepackage{tikz}
\usepackage{caption}
\usepackage{float}
\usepackage{longtable}
\usepackage{threeparttable,booktabs,multirow,lscape}
\usepackage{lipsum}
\usepackage{rotating}
\usepackage{multirow}
\usepackage{booktabs}
\usepackage{amsmath}
\usepackage{amssymb}
\usepackage{graphicx}
\usepackage[colorlinks,urlcolor=red]{hyperref}
\usepackage{graphicx}
\usepackage{subfigure}
\usepackage{listings}
\usepackage[ruled,vlined]{algorithm2e}

%
%
%
%
\newtheorem{fact}[theorem]{Fact}

\newcommand{\R}{\mathbb{R}}%
\newcommand{\N}{\mathbb{N}}%
\newcommand{\Hi}{\mathcal{H}}%

\renewcommand{\>}{\right\rangle}
\newcommand{\<}{\left\langle}

\DeclareMathOperator*\fix{Fix}

\begin{document}

\title{Extrapolated Sequential Constraint Method for Variational Inequality over the Intersection of Fixed-Point Sets
}

\titlerunning{Extrapolated Sequential Constraint Method}        

\author{Mootta Prangprakhon        \and
        Nimit Nimana 
}

\authorrunning{M. Prangprakhon and N. Nimana} 

\institute{M. Prangprakhon  \at
              Department of Mathematics, Faculty of Science, Khon Kaen University, Khon Kaen, 40002 Thailand,
              \email{mootta\_prangprakhon@hotmail.com}        
           \and
           N. Nimana\at
              Department of Mathematics, Faculty of Science, Khon Kaen University, Khon Kaen, 40002 Thailand,
               \email{nimitni@kku.ac.th}  
}

\date{Received: date / Accepted: date}

\maketitle

\begin{abstract}
This paper deals with the solving of variational inequality problem where the constrained set is given as the intersection of a number of fixed-point sets. To this end, we present  an extrapolated sequential constraint method. At each iteration, the proposed method is updated based on the ideas of a hybrid conjugate gradient method used to accelerate the well-known hybrid steepest descent method, and an extrapolated cyclic cutter method for solving a common fixed point problem. We prove strong convergence of the method under some suitable assumptions of step-size sequences. We finally show the numerical efficiency of the proposed method compared to some existing methods.
\keywords{Conjugate gradient direction \and Cutter \and Fixed point \and Hybrid steepest descent method \and Variational inequality}
\end{abstract}

\section{Introduction}
\label{intro}

In this paper, we consider the following variational inequality problem:
\begin{problem}\label{Problem-VIP} 
	Let $T_i:\Hi\to\Hi$,  $i=1,2,\ldots,m$, be cutters with $\bigcap\limits_{i = 1}^m \fix{{T_i}}\ne \emptyset$, and let $F:\Hi\to\Hi$ be $\eta$-strongly monotone and $\kappa$-Lipschitz continuous. Then, our objective is to find a point $\bar u\in \bigcap\limits_{i = 1}^m \fix{{T_i}}$ such that 
	\[\langle F({\bar u}),z- {\bar u}\rangle  \ge 0 \indent\text{    for all $z\in \bigcap\limits_{i = 1}^m \fix{{T_i}}$.   }\]
\end{problem}
 Attentively, Problem \ref{Problem-VIP} has a bilevel structure, namely, its outer level given by the variational inequality govern by the operator $F$, while the constrained set is the inner level problem, which is the common fixed point problem of cutter operators. We emphasize here the importance of Problem \ref{Problem-VIP} is not only the allowing us a generalization of the constrained set, but also various applications for modelling real-world problems like network location problems \cite{I13,I15-2,IH14,I19-2}, and machine learning \cite{I19-3}, to name but a few.

 For simplicity, we denote by VIP($F,C$) a variational inequality problem corresponding to an operator $F$ and a nonempty closed convex set $C$. In the literature, the simplest iterative algorithm for solving VIP($F,C$)  is the well-known {\it projected gradient method} (PGM) \cite{G64}. The method essentially has the form:
\begin{equation}\label{PGM}
\left \{
\begin{aligned}
&x^1\in C \text{       is arbitrarily chosen,       }  \\
&x^{n+1}=\mathrm{proj}_{C}(x^n-\mu F(x^n)),
\end{aligned} \right.
\end{equation} 
for every $n\in\mathbb{N}$, where $\mathrm{proj}_{C}:\Hi\to C$ is the metric projection onto $C$, $F:\Hi\to C$ is $\eta$-strongly monotone and $\kappa$-Lipschitz continuous over $C$ and  $\mu\in(0,2\eta/\kappa^2)$. It was proved that the sequence $\{x^n\}_{n=1}^\infty$ generated by (\ref{PGM}) converges strongly to the unique solution of VIP($F,C$) in \cite{G64}.
As PGM requires the use of the metric projection $\mathrm{proj}_{C}$, it is perfectly suitable for the case when $C$ is simple enough in the sense that $\mathrm{proj}_{C}$ has a closed-form expression. However, in many practical situations, the structure of $C$ can be highly intricate and, in consequence, $\mathrm{proj}_{C}$ is difficult to evaluate.
To overcome the above limitation, Yamada \cite{Y01} proposed the celebrated {\it hybrid steepest descent method} (HSDM) which essentially replaces the use of $\mathrm{proj}_{C}$ in (\ref{PGM}) with an appropriate nonexpansive operator $T$. By intepreting $C$ as the fixed point set of $T$, the method is defined by the following:
\begin{equation}\label{HSDM}
\left \{
\begin{aligned}
&x^1\in \Hi \text{       is arbitrarily chosen,       }  \\
&x^{n+1}=T(x^n-\mu\beta_nF(x^n)),
\end{aligned} \right.
\end{equation} 
for every $n\in\mathbb{N}$, where $F:\Hi\to\Hi$ is $\eta$-strongly monotone and $\kappa$-Lipschitz continuous over $\Hi$, and $\mu\in(0, 2\eta/\kappa^2)$. It is well-known that, under some certain conditions on $\{\beta_n\}_{n=1}^\infty\subset(0,1]$, the sequence $\{x^n\}_{n=1}^\infty$ generated by (\ref{HSDM}) converges strongly to the unique solution of  VIP($F,\fix T$), where $\fix T := \{ x \in \Hi: Tx = x\}$. 
Note that, in the context of (\ref{HSDM}), if $F:=\nabla f$ where $f:\Hi\to\mathbb{R}$ is a convex, continuously Fr\'echet differentiable functional, HSDM thus solves VIP($\nabla f,\fix T$), which is nothing else than the convex minimization problem over the fixed point set of a nonexpansive operator. On the other hand, it is well-known that the {\it conjugate gradient method} (CGM)\cite{NW99, DY99, FR64,GN92} 
and the {\it three-term conjugate gradient method} (TCGM)\cite{ZZLI06-1,ZZLI06-2,ZZLI06-3} 
have great efficacy in decreasing the function $f$ value rapidly. According to these underline motivations, several modifications among HSDM, CGM and TCGM are proposed  in order to accelerate HSDM, namely, the {\it hybrid conjugate gradient method} (HCGM) \cite{IY09}, the {\it hybrid three-term conjugate gradient method} (HTCGM)\cite{I11} and the {\it accelerated hybrid conjugate gradient method} (AHCGM) \cite{I15}.
As a matter of fact, HCGM and HTCGM are relatively similar in some basic structures and some additional conditions needed to ensure their convergences. In addressing such procedures, their common form is as follows:
\begin{equation}\label{HCGM}
\left \{
\begin{aligned}
&x^1\in\Hi \text{       is arbitrarily chosen,       }  \\
&d^1=-\nabla f(x^1),\\
& x^{n+1}=T(x^n+\mu\beta_nd^n),\\
\end{aligned} \right.
\end{equation} 
for every $n\in\mathbb{N}$, where $\mu\in(0, 2\eta/\kappa^2)$, $\{\beta_n\}_{n=1}^\infty\subset(0,1]$ is a step size and $\{d^n\}_{n=1}^\infty\in\Hi$ is a search direction. However, it is worth mentioning that the search directions of these methods are slightly different, that is, the search direction of HCGM is defined by
\begin{equation}\label{CGM}
d^{n}=-\nabla f(x^{n})+\varphi_{n}^{(1)}d^{n-1},
\end{equation}
meanwhile the search direction of HTCGM is defined by
\begin{equation}\label{TCGM}
d^{n}=-\nabla f(x^{n})+\varphi_{n}^{(1)}d^{n-1}-\varphi_{n}^{(2)}w^n,
\end{equation}
for every $n\in\mathbb{N}$, where $\{\varphi_n^{(i)}\}_{n=1}^\infty\subset[0,\infty)(i=1,2)$ and $\{w^n\}_{n=1}^\infty\in\Hi$ is arbitrarily chosen. Then, it was proved in \cite{IY09} and \cite{I11}  that, under some certain assumptions on $\{\beta_n\}_{n=1}^\infty\subset(0,1]$, each sequence generated by HCGM and HTCGM converges strongly to the unique solution of VIP($\nabla f,\fix T$) whenever $ \mathop {\lim }\limits_{n \to \infty }{\varphi_n^{(i)}} = 0 (i=1,2)$, and the sequences $\{\nabla f(x^n)\}_{n=1}^\infty$ and $\{w^n\}_{n=1}^\infty$ are bounded.

Next, let us review some sequential methods used for solving the common fixed point problem (in short, CFPP). Namely,  let $T_i:\Hi\to\Hi$, $i=1,2,\ldots,m$, be nonlinear operators, the problem is to find $$x^*\in\bigcap\limits_{i = 1}^m \fix{T_i},$$ provided that the intersection is nonempty. A classical sequential method for solving CFPP was developed from an iterative method introduced by Kaczmarz \cite{K37} who firstly aimed to solve a linear system in $\mathbb{R}^n$. The method was referred to the {\it cyclic projection method} (CPM) or {\it Kaczmarz method} (KM) which has the form:
\begin{equation}\label{CPM}
\left \{
\begin{aligned}
&x^1\in\Hi \text{       is arbitrarily chosen,       }  \\
& x^{n+1}=\mathrm{proj}_{C_m}\cdots \mathrm{proj}_{C_1}x^n,\\
\end{aligned} \right.
\end{equation} 
where $\mathrm{proj}_{C_i}$ are the metric projections onto the linear equations $C_i\subset\Hi$, $i=1,2,\ldots,m$. After that, the general case when $C_i\subset\Hi$, $i=1,2,\ldots,m$, are nonempty closed and convex subsets was considered by Bregman \cite{B65}. It was  proved that the sequence generated by (\ref{CPM}) converges weakly to a solution of  CFPP. 
As the interest in the aforementioned results continuously increase, it is well-known that, under some additional hypotheses, the convergence of CPM is true for a wider class of operators such as nonexpansive operators or cutter operators \cite{O67,C12,CC11,L95,C10}. In particular, the latter is a key tool of a method called the {\it cyclic cutter method} (CCM) which its weak convergence was proved by Bauschke and Combettes \cite{BC01}.
In order to accelerate the convergence of CCM, Cegielski and Censor \cite{CC12} proposed the so-called {\it extrapolated cyclic cutter method} (ECCM) which essentially requires the use of an appropriate step-size function $\sigma:\Hi\to(0,\infty)$ to speed up numerically  the convergence behaviour. Indeed, let $T_i:\Hi\to\Hi$, $i=1,2,\ldots,m$, be cutters with $\bigcap\limits_{i = 1}^m {\fix{T_i}\ne \emptyset}$, define $T: = {T_m}{T_{m - 1}}\cdots{T_1}$, $S_0:=Id$ and  $S_i:=T_iT_{i-1}\cdots T_1$, then they defined the step-size function $\sigma$ as
\begin{equation}\label{sigma}
\sigma (x):=\left\{ 
\begin{array}{ll}
\displaystyle\frac{\sum_{i=1}^{m}\langle Tx-S_{i-1}x,S_{i}x-S_{i-1}x\rangle 
}{\Vert Tx-x\Vert ^{2}}, & \text{for \ }x\notin \bigcap\limits_{i = 1}^m \fix{T_i}, \\ 
1, & \text{otherwise.}%
\end{array}%
\right. 
\end{equation}%
Moreover, it was shown that ECCM converges weakly whenever the cutter operators $T_i$, $i=1,2,\ldots,m$, satisfy the demi-closedness principle. 
Along the line of \cite{CC12}, Cegielski and Nimana \cite{CN19} indicated that there are some practical situations in which the value of the extrapolation function $\sigma$ can be enormously large, which consequently may produce some uncertainties in numerical experiments. In order to avoid these situations, they proposed an algorithm called the {\it modified extrapolated cyclic subgradient projection method} (MECSPM). The main idea of this method is to map each iterate obtaining from ECCM via the last subgradient projection. If the constrained sets are nonempty closed convex sets, the modification is nothing else than the projecting a sequence generated by ECCM into the last constraint set.
To conclude, the aforementioned methods used for solving variational inequality problem and common fixed point problem are concisely summarized in Table \ref{tab:1}.
\begin{table}[h]
	\caption{Summary of the corresponding iterative methods used for solving Problem \ref{Problem-VIP}.}
	\label{tab:1}       
	\begin{tabular}{l  c c c c  c }
		\hline\noalign{\smallskip}
		\textbf{Reference}& \textbf{Problem} & \textbf{Method} & \textbf{Constrained Operator}  \\
		\noalign{\smallskip}\hline\noalign{\smallskip}
	Goldstein \cite{G64} & VIP$(F, C)$ & PGM & metric projection \\ 
	Yamada \cite{Y01} & VIP$(F, \fix T)$ & HSDM & nonexpansive  \\ 
	Iiduka $\&$ Yamada \cite{IY09}& VIP$(\nabla f, \fix T)$ & HCGM & nonexpansive  \\ 
	Iiduka \cite{I11} & VIP$(\nabla f, \fix T)$ & HTCGM & nonexpansive \\ 
	Bregman \cite{B65} & CFPP & CPM & metric projection \\ 
	Bauschke $\&$ Combettes \cite{BC01}  & CFPP & CCM & cutter \\ 
	Cegielski $\&$ Censor \cite{CC12} & CFPP & ECCM &  cutter\\ 
	Cegielski $\&$ Nimana \cite{CN19}  & CFPP & MECSPM &  subgradient projection \\ 
	{\bf This work} & VIP$\left(F, \bigcap\limits_{i = 1}^m \fix{{T_i}}\right)$& ESCoM-CGD & cutter \\
		\noalign{\smallskip}\hline
	\end{tabular}
\end{table}

The main contribution of this paper is an iterative algorithm called the {\it extrapolated sequential constraint method with conjugate gradient direction} (ESCoM-CGD) used for solving the variational inequality problem over the intersection of the fixed-point sets. To construct the algorithm, we utilize some ideas of the aforementioned methods, namely, HCGM \cite{IY09} and MECSPM\cite{CN19}. 
Under the context of cutter operators and some certain conditions, we establish strong convergence of the proposed algorithm. In order to demonstrate the effectiveness and the performance of the algorithm, we present numerical results and numerical comparisons of the algorithm with some existing methods such as HCGM and HTCGM. 

The remainder of this paper is organized as follows. In Section 2, we  collect some useful definitions and results needed in the paper. In Section 3, we introduce ESCoM-CGD used for solving Problem \ref{Problem-VIP} and subsequently analyse its convergence result. In Section 4, we derive an important situation of the considered problem by means of the subgradient projection. In Section 5, the efficacy of ESCoM-CGD is illustrated by some numerical results. Finally, we give some concluding remarks in Section 6.

\section{Preliminaries}
Throughout the paper, $\Hi$ is always a real Hilbert space with an inner product $\langle \cdot , \cdot \rangle$ and with the norm $\parallel  \cdot \parallel$. For a sequence $\{ {x^n}\} _{n = 1}^\infty$, the expressions ${x^n}\rightharpoonup x$ and ${x^n} \to x$ denote $\{ {x^n}\} _{n = 1}^\infty$ converges to $x$ weakly and converges to $x$ in norm, respectively. $Id$ represents the identity operator on $\Hi.$ 


An operator $F:\Hi\to\Hi$ is said to be
$\eta$-{\it strongly monotone} if  there exits a constant $\eta>0$ such that
$\langle Fx-Fy,x-y \rangle \ge \eta \|x-y\|^2,$
for all $x,y\in\Hi$, and is said to be $\kappa$-{\it Lipschitz continuous} if  there exits a constant $\kappa>0$ such that
$\|Fx-Fy\|\le \kappa\|x-y\|,$
for all $x,y\in\Hi$.

The following lemma found in \cite[Lemma 3.1(b)]{Y01} will be useful in the sequel. 
\begin{lemma}\label{yamada} Suppose that $F:\Hi\to\Hi$ is $\eta$-strongly monotone and $\kappa$-Lipschitz continuous. For any $\mu  \in (0,2\eta /{\kappa ^2})$ and $\beta  \in (0,1]$, define the operator $T^\beta:\Hi\to\Hi$ by 
	${T^\beta }: = Id -  \mu \beta F$. Then 
	\[\|{T^\beta }x - {T^\beta }y\| \le (1 - \beta \tau )\|x - y\|,\]
	for all $x,y\in\Hi$, where $\tau : = 1 - \sqrt {1 + {\mu ^2}{\kappa ^2} - 2\mu \eta }  \in (0,1].$
\end{lemma}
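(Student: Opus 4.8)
The plan is to estimate $\|T^\beta x - T^\beta y\|^2$ directly by expanding the square, and then to reduce the resulting quadratic-in-$\beta$ bound to $(1-\beta\tau)^2$ by a short algebraic argument exploiting the defining relation of $\tau$ together with $\beta\in(0,1]$.

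\textbf{Preliminary observation.} I would first note that combining $\eta$-strong monotonicity with $\kappa$-Lipschitz continuity and the Cauchy--Schwarz inequality gives $\eta\|x-y\|^2 \le \langle Fx-Fy, x-y\rangle \le \|Fx-Fy\|\,\|x-y\| \le \kappa\|x-y\|^2$ whenever $x\ne y$, hence $\eta \le \kappa$. From this together with $\mu\in(0,2\eta/\kappa^2)$ one checks that $0 \le 1+\mu^2\kappa^2-2\mu\eta < 1$: the left inequality because the convex quadratic $\mu\mapsto 1+\mu^2\kappa^2-2\mu\eta$ attains its minimum value $1-\eta^2/\kappa^2\ge 0$ at $\mu=\eta/\kappa^2$, and the right inequality because $\mu^2\kappa^2-2\mu\eta = \mu(\mu\kappa^2-2\eta)<0$. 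Thus $\tau := 1-\sqrt{1+\mu^2\kappa^2-2\mu\eta}$ is well defined and $\tau\in(0,1]$, and the definition of $\tau$ can be rewritten as $\sqrt{1+\mu^2\kappa^2-2\mu\eta}=1-\tau$, i.e.
\[
2\tau-\tau^2 = 2\mu\eta - \mu^2\kappa^2,
\]
which is the identity that will drive the final estimate.

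\textbf{Main estimate.} For arbitrary $x,y\in\Hi$, since $T^\beta x - T^\beta y = (x-y) - \mu\beta(Fx-Fy)$, expanding and using strong monotonicity on the cross term and Lipschitz continuity on the quadratic term yields
\[
\|T^\beta x - T^\beta y\|^2 = \|x-y\|^2 - 2\mu\beta\langle Fx-Fy, x-y\rangle + \mu^2\beta^2\|Fx-Fy\|^2 \le \bigl(1 - 2\mu\beta\eta + \mu^2\beta^2\kappa^2\bigr)\|x-y\|^2 .
\]
It then remains to prove $1 - 2\mu\beta\eta + \mu^2\beta^2\kappa^2 \le (1-\beta\tau)^2 = 1 - 2\beta\tau + \beta^2\tau^2$; dividing the difference by $\beta>0$, this is equivalent to $2(\tau-\mu\eta) + \beta(\mu^2\kappa^2-\tau^2) \le 0$. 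Here $\mu^2\kappa^2-\tau^2\ge 0$: if $\mu\kappa\ge 1$ this is clear since $\tau\le 1$, while if $\mu\kappa<1$ then squaring the equivalent inequality $1-\mu\kappa \le 1-\tau = \sqrt{1+\mu^2\kappa^2-2\mu\eta}$ reduces it to $\eta\le\kappa$. Hence the left-hand side is nondecreasing in $\beta$, so on $(0,1]$ it is largest at $\beta=1$, where it equals $2(\tau-\mu\eta)+\mu^2\kappa^2-\tau^2 = (2\tau-\tau^2)-(2\mu\eta-\mu^2\kappa^2) = 0$ by the identity above. Combining the two displays gives $\|T^\beta x - T^\beta y\|^2 \le (1-\beta\tau)^2\|x-y\|^2$, and since $\beta\tau\in(0,1]$ we have $1-\beta\tau\ge 0$, so taking square roots gives the claim. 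I expect the only delicate point to be marshalling the elementary inequalities that control the quadratic coefficient — in particular, observing that $\eta\le\kappa$ is exactly what is needed both to make $\tau$ well defined with $\tau\in(0,1]$ and to sign $\mu^2\kappa^2-\tau^2$ — while everything else is routine bookkeeping.
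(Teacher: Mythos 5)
Your proof is correct. Note that the paper itself does not prove this lemma: it imports it verbatim from Yamada \cite[Lemma 3.1(b)]{Y01}, and the only argument it supplies is the Remark following the lemma, which establishes $\eta\le\kappa$ and hence $\tau\in(0,1]$ --- exactly your ``preliminary observation'' (the paper signs $1+\mu^2\kappa^2-2\mu\eta\ge 0$ via $(1-\mu\kappa)^2\le 1+\mu^2\kappa^2-2\mu\eta$, you via the minimum of the quadratic in $\mu$; both rest on $\eta\le\kappa$). Your main estimate is the standard one (expand $\|(x-y)-\mu\beta(Fx-Fy)\|^2$, apply strong monotonicity to the cross term and Lipschitz continuity to the quadratic term), and your handling of the final comparison $1-2\mu\beta\eta+\mu^2\beta^2\kappa^2\le(1-\beta\tau)^2$ is clean and checks out: the difference divided by $\beta$ is affine in $\beta$ with nonnegative slope $\mu^2\kappa^2-\tau^2$ (your case analysis on $\mu\kappa\gtrless 1$ is valid, and note $2(\mu\eta-\tau)=\mu^2\kappa^2-\tau^2$ gives $\tau\le\mu\eta$ for free), so it is maximized at $\beta=1$ where the identity $2\tau-\tau^2=2\mu\eta-\mu^2\kappa^2$ makes it vanish. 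The final square root is legitimate since $\beta\tau\le 1$. In short, you have supplied a complete, self-contained proof of a statement the paper only cites; nothing is missing.
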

\begin{remark}
	It is worth to notice that the well definedness of the parameter $\tau\in(0,1]$ is guaranteed by the assumption of $F$. Indeed, the monotonicity of $F$ and the Cauchy-Schwarz inequality yield that
	$ \eta \|x - y{\|^2}\le\langle F(x) - F(y),x - y\rangle\le\|F(x)-F(y)\|\|x-y\|,$
	and hence
	$\eta\|x-y\|\le\|F(x)-F(y)\|.$
	Due to the Lipschitz continuity of $F$, we obtain
	$\|F(x)-F(y)\|\le\kappa\|x-y\|,$
	which implies that
	$$0<\eta\le\kappa.$$ 
	Thus, we have $0<\frac{2\eta}{\kappa^2}$. Setting $\mu\in(0,\frac{2\eta}{\kappa^2})$, we obtain $$0 \le {(1 - \mu \kappa )^2} \le 1 + {\mu ^2}{\kappa ^2} - 2\mu \eta  < 1.$$ 
	Therefore $$0 < 1 - \sqrt {1 + {\mu ^2}{\kappa ^2} - 2\mu \eta }  \le 1,$$
	which means that $\tau\in(0,1]$.
\end{remark}
Below, some concepts of quasi-nonexpansivity of operators are presented for the sake of further use. More details can be found in \cite[Section 2.1.3]{C12}.

An operator $T:\Hi\to\Hi$ with $\fix T\ne\emptyset$ is said to be
{\it quasi-nonexpansive} if 
$\|Tx-z\|\le\|x-z\|,$
for all $x\in\Hi$ and for all $z\in\fix T$,
is said to be {\it $\rho$-strongly quasi-nonexpansive}, where $\rho\ge0$, if  
$\|Tx - z{\|^2} \le \|x - z{\|^2} - \rho \|Tx - x{\|^2},$
for all $x\in\Hi$ and for all $z\in\fix T$,
and, is said to be a {\it cutter }if 
$ \langle x - Tx,z - Tx\rangle  \le 0,$
for all $x\in\Hi$ and for all $z\in\fix T$.
\begin{fact}\label{fact1}
	If $T:\Hi\to\Hi$ is quasi-nonexpansive, then $\fix T$ is closed and convex.
\end{fact}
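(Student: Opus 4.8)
The plan is to verify the two asserted properties of $\fix T$ separately, each by a short direct estimate that uses nothing beyond the defining inequality $\|Tx-z\|\le\|x-z\|$ for every $x\in\Hi$ and every $z\in\fix T$.

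For closedness, I would take an arbitrary sequence $\{z^k\}_{k=1}^\infty\subset\fix T$ with $z^k\to z$ for some $z\in\Hi$ and argue that $Tz=z$. Applying quasi-nonexpansivity of $T$ with the fixed point $z^k$ gives $\|Tz-z^k\|\le\|z-z^k\|$, and then the triangle inequality yields $\|Tz-z\|\le\|Tz-z^k\|+\|z^k-z\|\le 2\|z-z^k\|\to 0$ as $k\to\infty$. Hence $Tz=z$, so $z\in\fix T$, which shows that $\fix T$ contains all of its strong limit points and is therefore closed.

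For convexity, I would fix $z^1,z^2\in\fix T$ and $\lambda\in[0,1]$, set $z:=\lambda z^1+(1-\lambda)z^2$, and aim to prove $Tz=z$. The key tool is the elementary identity, valid in any Hilbert space, $\|y-z\|^2=\lambda\|y-z^1\|^2+(1-\lambda)\|y-z^2\|^2-\lambda(1-\lambda)\|z^1-z^2\|^2$ for all $y\in\Hi$; I would apply it with $y:=Tz$. Estimating $\|Tz-z^1\|^2\le\|z-z^1\|^2$ and $\|Tz-z^2\|^2\le\|z-z^2\|^2$ by quasi-nonexpansivity, and substituting $\|z-z^1\|=(1-\lambda)\|z^1-z^2\|$ together with $\|z-z^2\|=\lambda\|z^1-z^2\|$, the right-hand side simplifies to $\lambda(1-\lambda)\|z^1-z^2\|^2\big[(1-\lambda)+\lambda-1\big]=0$, so $\|Tz-z\|^2\le 0$ and therefore $Tz=z$, i.e. $z\in\fix T$.

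This argument needs no hypotheses beyond $\fix T\ne\emptyset$ and quasi-nonexpansivity of $T$. The closedness part is immediate from the triangle inequality; the only step I would spell out with a little care is the convex-combination identity and the bookkeeping of its coefficients in the convexity argument, which is the mild obstacle here.
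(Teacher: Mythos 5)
Your proof is correct. The paper itself states this as a known fact and gives no proof, deferring to Cegielski's monograph \cite[Section 2.1.3]{C12}; there the standard route is to observe that for each $x\in\Hi$ the set $H_x:=\{u\in\Hi:\|Tx-u\|\le\|x-u\|\}=\{u\in\Hi:2\langle x-Tx,u\rangle\le\|x\|^2-\|Tx\|^2\}$ is a closed half-space (or all of $\Hi$ when $Tx=x$), that quasi-nonexpansivity gives $\fix T\subseteq\bigcap_{x\in\Hi}H_x$, and that the reverse inclusion follows by taking $x=u$, so $\fix T$ is an intersection of closed convex sets. Your argument reaches the same conclusion by two direct estimates instead: the closedness step via the triangle inequality is fine, and the convexity step correctly combines the parallelogram-type identity $\|y-z\|^2=\lambda\|y-z^1\|^2+(1-\lambda)\|y-z^2\|^2-\lambda(1-\lambda)\|z^1-z^2\|^2$ with $\|z-z^1\|=(1-\lambda)\|z^1-z^2\|$ and $\|z-z^2\|=\lambda\|z^1-z^2\|$ to force $\|Tz-z\|=0$. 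The half-space characterization is slightly slicker and yields both properties at once; your computation is more elementary and self-contained. Either is acceptable here.
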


\begin{fact}\label{fact2}
	Let $T:\Hi\to\Hi$ be a cutter. Then the following properties hold:
	
	(i) $\langle Tx - x,z - x\rangle  \ge \|Tx - x{\|^2}$ for every $x\in\Hi$ and $z\in\fix T$.
	
	(ii) $T$ is 1-strongly quasi-nonexpansive.
	
	
\end{fact}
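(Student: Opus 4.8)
The plan is to derive both assertions directly from the defining inequality of a cutter, namely $\langle x - Tx, z - Tx\rangle \le 0$ for all $x \in \Hi$ and all $z \in \fix T$, using only elementary manipulations of the inner product.

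For part (i), I would start from the cutter inequality and rewrite the second argument by inserting $x$, that is, $z - Tx = (z - x) + (x - Tx)$. Expanding by bilinearity gives $\langle x - Tx, z - x\rangle + \|x - Tx\|^2 \le 0$, and rearranging together with flipping the sign in the first slot yields $\langle Tx - x, z - x\rangle \ge \|Tx - x\|^2$, which is exactly (i).

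For part (ii), I would invoke the algebraic identity coming from $Tx - z = (Tx - x) + (x - z)$, namely
\[
\|Tx - z\|^2 = \|Tx - x\|^2 + 2\langle Tx - x, x - z\rangle + \|x - z\|^2 .
\]
By part (i) we have $\langle Tx - x, x - z\rangle = -\langle Tx - x, z - x\rangle \le -\|Tx - x\|^2$, so substituting this bound gives $\|Tx - z\|^2 \le \|x - z\|^2 - \|Tx - x\|^2$ for every $x \in \Hi$ and $z \in \fix T$, which by definition is precisely the statement that $T$ is $1$-strongly quasi-nonexpansive. Alternatively, one can avoid appealing to (i) and argue symmetrically: expand $\|x - z\|^2$ via $x - z = (x - Tx) + (Tx - z)$ and use $\langle x - Tx, Tx - z\rangle = -\langle x - Tx, z - Tx\rangle \ge 0$, which again follows from the cutter property.

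Since every step is a one-line inner-product computation, there is no genuine obstacle here; the only point requiring care is the bookkeeping of signs when passing between $\langle x - Tx, \cdot\rangle$ and $\langle Tx - x, \cdot\rangle$ and when reindexing the middle cross term.
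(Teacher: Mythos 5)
Your proof is correct and is the standard argument: part (i) follows by inserting $x$ into the second slot of the cutter inequality, and part (ii) by expanding $\|Tx-z\|^2$ and applying (i), which matches the definition $\|Tx-z\|^2\le\|x-z\|^2-\rho\|Tx-x\|^2$ with $\rho=1$. The paper itself states this fact without proof, citing Cegielski's book, and your derivation is exactly the argument given there.
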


We recall a notion of the demi-closedness principle in the following definition.
\begin{definition}
	An operator $T:\Hi \to \Hi$ is said to satisfy the {\it demi-closedness} (DC) principle if $T-Id$ is demi-closed at $0$, that is, for any sequence $\{x^n\}_{n=1}^\infty\subset\Hi$, if $x^n\rightharpoonup y\in\Hi$ and $\|(T-Id)x^n\| \to 0$, then $Ty=y$.
\end{definition}

Further, we recall that an operator $T:\Hi\to\Hi$ is said to be {\it nonexpansive} if 
$\|Tx-Ty\|\le\|x-y\|,$ for all $x,y\in\Hi$. 
It is worth mentioning that if $T:\Hi\to\Hi$ is a nonexpansive operator with $\fix T\neq\emptyset$, then the operator $T$ satisfies the DC principle (see \cite[Lemma 2]{Z71}).

For an operator $T:\Hi\to\Hi$ and a real number $\lambda\in[0,2]$, the operator $T_\lambda:=(1-\lambda)Id+\lambda T$ is called a {\it relaxation} of $T$ and $\lambda$ is called a relaxation parameter. Actually, in many situations, the relaxation parameter which is greater than $2$ may yield a superiority of algorithmic convergence property. So, we are now in a position to recall a generalized relaxation of an operator.
The generalized relaxation of an operator $T:\Hi\to\Hi$ is defined by 
${T_{\sigma ,\lambda }}x: = x + \lambda \sigma (x)(Tx - x),$
where $\sigma:\Hi\to(0,\infty)$ is a step-size function.
If  $\sigma (x) \ge 1$ for all $x\in\Hi$, then the operator ${T_{\sigma ,\lambda }}$ is called an {\it extrapolation} of $T$. In the case that $\sigma(x)=1$, for all $x\in\Hi$, the generalized relaxation of $T$ is reduced to the relaxation of $T$, that is $T_{\sigma,\lambda}=T_\lambda$. We denote here that $T_{\sigma}:=T_{\sigma,1}$. For any $x\in\Hi$, it can be noted that 
$$T_{\sigma,\lambda}x-x=\lambda\sigma(x)(Tx-x)=\lambda(T_\sigma x-x),$$ i.e., $T_{\sigma,\lambda}x=x+\lambda(T_\sigma x-x),$ and $$\fix T_{\sigma,\lambda}=\fix T_\sigma=\fix T,$$ for any $\lambda\ne0$.

The following lemma plays an important role in proving our convergence result. The proof can be found in \cite[Section 4.10]{C12}.
\begin{lemma}\label{lemma-CC12}
	Let ${T_i}:\Hi \to \Hi, i = 1,2,\ldots,m,$ be cutters with $\bigcap\limits_{i = 1}^m {\fix{T_i}}  \ne \emptyset$, and denote $T:=T_mT_{m-1}\cdots T_1$. Let $\sigma:\Hi\to(0,\infty)$ be defined by (\ref{sigma}), then the following properties hold:
	\begin{itemize}
		\item[(i)] For any $x\notin\fix T$, we have
		$${\sigma }(x) \ge \frac{{\frac{1}{2}\sum\limits_{i = 1}^m {\|{S_i}x - {S_{i - 1}}x{\|^2}} }}{{\|Tx - x{\|^2}}} \ge \frac{1}{{2m}},$$
		where $S_0=Id$ and  $S_i=T_iT_{i-1}\cdots T_1$.
		\item[(ii)] The operator $T_\sigma$ is a cutter.
	\end{itemize}
\end{lemma}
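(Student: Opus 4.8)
The plan is to work entirely with the telescoping increments $a_i:=S_ix-S_{i-1}x$ and residuals $b_i:=Tx-S_ix$ ($i=0,1,\dots,m$), which satisfy $b_0=Tx-x$, $b_m=0$, $b_{i-1}-b_i=a_i$ and $\sum_{i=1}^m a_i=Tx-x$. Since $\bigcap_{i=1}^m\fix T_i\subseteq\fix T$ trivially, the hypothesis $x\notin\fix T$ in (i) forces $x\notin\bigcap_{i=1}^m\fix T_i$ (so $\sigma(x)$ is the first branch of (\ref{sigma})) and $Tx\ne x$, whence $\|Tx-x\|^2>0$. For (i) the first step is to rewrite the numerator of $\sigma(x)$ via the polarization-type identity $\langle u,u-v\rangle=\tfrac12(\|u\|^2-\|v\|^2+\|u-v\|^2)$ with $u=b_{i-1}$, $v=b_i$: since $Tx-S_{i-1}x=b_{i-1}$ and $S_ix-S_{i-1}x=a_i=b_{i-1}-b_i$, this gives $\langle Tx-S_{i-1}x,S_ix-S_{i-1}x\rangle=\tfrac12(\|b_{i-1}\|^2-\|b_i\|^2+\|a_i\|^2)$. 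Summing over $i$ and telescoping the first two terms yields
\[\sum_{i=1}^m\langle Tx-S_{i-1}x,S_ix-S_{i-1}x\rangle=\tfrac12\|Tx-x\|^2+\tfrac12\sum_{i=1}^m\|S_ix-S_{i-1}x\|^2.\]
Dividing by $\|Tx-x\|^2$ gives $\sigma(x)=\tfrac12+\tfrac12\sum_{i=1}^m\|a_i\|^2/\|Tx-x\|^2$, which already dominates the middle expression in (i); and since $\|Tx-x\|^2=\|\sum_{i=1}^m a_i\|^2\le m\sum_{i=1}^m\|a_i\|^2$ by Cauchy--Schwarz, while $\sum_i\|a_i\|^2>0$ (because $\|Tx-x\|^2>0$), that middle expression is in turn $\ge\tfrac1{2m}$. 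This proves (i).

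For (ii) recall that $\fix T_\sigma=\fix T$, and that $\fix T=\bigcap_{i=1}^m\fix T_i$ since the $T_i$ are cutters with a common fixed point; thus it suffices to show $\langle x-T_\sigma x,z-T_\sigma x\rangle\le0$ for every $z\in\bigcap_{i=1}^m\fix T_i$. If $x\in\bigcap_i\fix T_i$ then $\sigma(x)=1$ and $Tx=x$, so $T_\sigma x=x$ and the inequality is trivial; assume therefore $x\notin\bigcap_i\fix T_i$. Since $T_\sigma x=x+\sigma(x)(Tx-x)$, expanding the inner product gives
\[\langle x-T_\sigma x,z-T_\sigma x\rangle=\sigma(x)\bigl(\sigma(x)\|Tx-x\|^2-\langle Tx-x,z-x\rangle\bigr),\]
so as $\sigma(x)>0$ the assertion reduces to $\sigma(x)\|Tx-x\|^2\le\langle Tx-x,z-x\rangle$. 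Here the cutter hypothesis enters: the cutter inequality for $T_i$ at the \emph{intermediate iterate} $S_{i-1}x$ with common fixed point $z$, namely $\langle S_{i-1}x-S_ix,z-S_ix\rangle\le0$, gives $\langle a_i,z-S_ix\rangle\ge0$, and summing, $\sum_{i=1}^m\langle a_i,z-S_ix\rangle\ge0$. Consequently, decomposing $z-x=(z-S_ix)+(S_ix-x)$,
\[\langle Tx-x,z-x\rangle=\sum_{i=1}^m\langle a_i,z-S_ix\rangle+\sum_{i=1}^m\langle a_i,S_ix-x\rangle\ge\sum_{i=1}^m\langle a_i,S_ix-x\rangle,\]
and a short computation — expanding $S_ix-x=\sum_{j\le i}a_j$ and using the symmetry of $\langle a_i,a_j\rangle$ together with $\|Tx-x\|^2=\sum_i\|a_i\|^2+2\sum_{i<j}\langle a_i,a_j\rangle$ — shows $\sum_{i=1}^m\langle a_i,S_ix-x\rangle=\tfrac12\|Tx-x\|^2+\tfrac12\sum_i\|a_i\|^2$, which equals $\sigma(x)\|Tx-x\|^2$ by the identity established in (i). This gives the required inequality and completes (ii).

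The step I expect to require the most care is the recognition that $\sigma(x)\|Tx-x\|^2$ admits two equivalent expansions: the defining one, $\sum_i\langle Tx-S_{i-1}x,a_i\rangle$, and the ``telescoped'' one, $\sum_i\langle a_i,S_ix-x\rangle$. The first is what (\ref{sigma}) hands us, while the second is precisely what pairs against the quantity $\sum_i\langle a_i,z-S_ix\rangle\ge0$ coming from the cutter inequalities evaluated at the intermediate points $S_{i-1}x$ rather than at $x$. The equality of the two expansions rests only on the symmetry $\langle a_i,a_j\rangle=\langle a_j,a_i\rangle$ of the cross terms, so it is not deep; but identifying it as the bridge between the purely algebraic lower bound of (i) and the fixed-point estimate of (ii) is the crux. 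The remaining ingredients — the polarization identity, the bound $\|\sum a_i\|^2\le m\sum\|a_i\|^2$, and the quadratic expansion of $\langle x-T_\sigma x,z-T_\sigma x\rangle$ — are routine.
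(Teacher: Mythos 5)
Your proof is correct, and it is essentially the argument behind the cited result: the paper itself only refers to \cite[Section 4.10]{C12} (going back to Cegielski--Censor \cite{CC12}), where the same telescoping identity $\sum_{i}\langle Tx-S_{i-1}x,S_ix-S_{i-1}x\rangle=\tfrac12\|Tx-x\|^2+\tfrac12\sum_i\|S_ix-S_{i-1}x\|^2$, the bound $\|\sum_i a_i\|^2\le m\sum_i\|a_i\|^2$, and the cutter inequalities applied at the intermediate points $S_{i-1}x$ are used. The only ingredient you invoke without proof, namely $\fix T=\bigcap_{i=1}^m\fix T_i$ for cutters with a common fixed point, is a standard fact also available in \cite{C12}, so nothing is missing.
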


\section{Algorithms and Convergence Results}
In this section, we start with the introducing a new iterative algorithm for solving Problem \ref{Problem-VIP} and subsequently study its convergence result. For the sake of convenience, we denote the following notations: the compositions 
$T := {T_m}{T_{m - 1}}\cdots {T_1},$
$S_0:=Id,$ and $S_i:=T_iT_{i-1}\cdots T_1$, $i=1,2,\dots,m,$
where $T_i:\Hi\to\Hi$,  $i=1,2,\ldots,m$, are cutters with $\bigcap\limits_{i = 1}^m \fix{{T_i}}\ne \emptyset$.

The iterative method for solving Problem \ref{Problem-VIP} is presented as follows.

\begin{algorithm}[H]
	\SetAlgoLined
	\vskip2mm
	\textbf{Initialization}: Given $\mu  \in (0,2\eta /{\kappa ^2})$, $\{ \beta _{n}\}_{n = 1}^\infty\subset(0,1]$, $\{ \varphi _{n}\}_{n = 1}^\infty\subset[0,\infty)$  and a positive sequence $\{\lambda_n\}_{n=1}^\infty$. Choose $x^1\in \Hi$ arbitrarily and set ${d^1} =  - F({x^1})$. \\
	
	\textbf{Iterative Steps}: For a current iterate $x^n\in \Hi$ ($n\in\mathbb{N}$), calculate as follows:
	
	\textbf{Step 1}. Compute $y^n$ and the step size as 
	$$y^n:=x^n + \mu\beta _{n}d^n$$
	and
	\begin{equation*}
	\sigma (y ^n):=\left\{ 
	\begin{array}{ll}
	\displaystyle\frac{\sum_{i=1}^{m}\langle Ty^n-S_{i-1}y^n,S_{i}y^n-S_{i-1}y^n\rangle 
	}{\Vert Ty^n-y^n\Vert ^{2}}, & \text{for \ }y^n\notin \bigcap\limits_{i = 1}^m \fix{{T_i}}, \\ 
	1, & \text{otherwise.}%
	\end{array}%
	\right. 
	\end{equation*}%
	
	
	\textbf{Step 2}. Compute the next iterate and the search direction as
	\begin{eqnarray}\label{ECCM}
	{x^{n + 1}} &:=& {T_m}({y ^n} + {\lambda _n}\sigma ({y ^n})(Ty ^n - y^n)), \\
	{d^{n + 1}}&:=&  - F({x^{n + 1}}) + {\varphi _{n + 1}}{d^n}.\nonumber
	\end{eqnarray} 
	
	Update $n:=n+1$ and return to \textbf{Step 1}.
	\caption{ESCoM-CGD}
	\label{algorithm}
	\vskip2mm
\end{algorithm}


\begin{remark}\label{rem8}
	\begin{itemize}
		\item[(i)] In the case of $m=1$, $\lambda_n\equiv1$, and $\sigma(y^n)\equiv1$, Algorithm \ref{algorithm} becomes HCGM considered in \cite{IY09}. Furthermore, if $\varphi_n\equiv0$, Algorithm \ref{algorithm} is the same as HSDM investigated by Yamada \cite{Y01}.
		\item[(ii)] If $F\equiv0$, Algorithm \ref{algorithm} forms a generalization of MECSPM \cite{CN19} in the sense of the operators $T_i, i=1,\ldots,m$, are assumed to be subgradient projections. Moreover, if the operator $T_m$ in (\ref{ECCM}) is omitted from the method, Algorithm \ref{algorithm} coincides with ECCM \cite{CC12}.
		\item[(iii)]  Note that Algorithm \ref{algorithm}  is not feasible in the sense that the generated sequence $\{ {x^n}\} _{n = 1}^\infty$ need not belong to the constrained set. Moreover,  the step size $\sigma(y^n)$ may have large values for some $n\in\N$. These situations may yield the instabilities of the method. To avoid this situation, let us observe that if the operator $T_m$ is the metric projection onto a nonempty closed convex and bounded set $C_m$, and the initial point $x^1$ is chosen from $C_m$, then the iterate ${x^n}\in C_m$  $(n\in\N)$, which subsequently yields the boundedness of $\{ {x^n}\} _{n = 1}^\infty$. In this case, even if we can not gain the feasibility of the method, it is very worth to note that the presence of $T_m$ in (\ref{ECCM}) ensure us that the generated sequence $\{ {x^n}\} _{n = 1}^\infty\subset C_m$, which may yield the numerical stabilities of the method, see \cite[Section 4]{CN19} further discussion and some numerical illustrations.
	\end{itemize}
\end{remark}

It is worth noting that the existence and uniqueness of the solution to Problem \ref{Problem-VIP} is guaranteed by the above conditions according to \cite[Theorem 2.3.3]{FP03}.
In order to analyze the main convergence theorem, we present a series of preliminary convergence results which is indicating some important properties of the sequences generated by Algorithm \ref{algorithm}. To begin with, the boundedness of the sequences is investigated in the following lemma.

\begin{lemma}\label{lemma1} Let the sequences $\{ {x^n}\} _{n = 1}^\infty$, $\{ {y^n}\} _{n = 1}^\infty$ and  $\{ {d^n}\} _{n = 1}^\infty$ be given by Algorithm \ref{algorithm}. Suppose that $\mathop {\lim }\limits_{n \to \infty }{\beta _n} = 0$, $\mathop {\lim }\limits_{n \to \infty } {\varphi _n} = 0$, and $\{ \lambda _{n}\}_{n = 1}^\infty \subset [\varepsilon ,2 - \varepsilon]$ for some constant $\varepsilon \in (0,1)$. If $\{ F({x^n})\} _{n = 1}^\infty$ is bounded, then the sequences $\{ {x^n}\} _{n = 1}^\infty$, $\{ {y^n}\} _{n = 1}^\infty$ and  $\{ {d^n}\} _{n = 1}^\infty$ are bounded.
\end{lemma}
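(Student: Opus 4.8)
The plan is to prove the three boundedness claims in the order $\{d^n\}_{n=1}^\infty$, then $\{x^n\}_{n=1}^\infty$, then $\{y^n\}_{n=1}^\infty$. The key observation is that the search directions can be controlled first, directly from the hypotheses that $\{F(x^n)\}_{n=1}^\infty$ is bounded and $\varphi_n\to 0$, with no prior information on $\{x^n\}_{n=1}^\infty$; once $\{d^n\}_{n=1}^\infty$ is known to be bounded, the conjugate-gradient correction $\varphi_n d^{n-1}$ becomes a vanishing perturbation of a hybrid steepest descent step, and a Yamada-type quasi-Fej\'er argument relative to an arbitrary $z\in\bigcap_{i=1}^m\fix T_i$ can be run; the boundedness of $\{y^n\}_{n=1}^\infty$ then follows at once from $y^n=x^n+\mu\beta_n d^n$ and $\beta_n\le 1$.

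\emph{Bounding $\{d^n\}$.} From $d^{n+1}=-F(x^{n+1})+\varphi_{n+1}d^n$ and the triangle inequality, $\|d^{n+1}\|\le M+\varphi_{n+1}\|d^n\|$, where $M:=\sup_n\|F(x^n)\|<\infty$ by hypothesis. Since $\varphi_n\to 0$, I would fix $N\ge 2$ with $\varphi_n\le\tfrac12$ for all $n\ge N$; a one-line induction then shows $\|d^n\|\le 2M+\|d^N\|$ for every $n\ge N$, so, together with the finitely many earlier terms, $D:=\sup_n\|d^n\|<\infty$.

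\emph{Bounding $\{x^n\}$.} Fix $z\in\bigcap_{i=1}^m\fix T_i$. Then $T_iz=z$ for every $i$, hence $Tz=z$; moreover $\fix T=\fix T_\sigma=\fix T_{\sigma,\lambda_n}$ by the identities recalled in Section 2, so $z\in\fix T_{\sigma,\lambda_n}$, and of course $z\in\fix T_m$. Writing $T^{\beta_n}:=Id-\mu\beta_n F$ as in Lemma \ref{yamada}, for $n\ge 2$ one has $y^n=T^{\beta_n}x^n+\mu\beta_n\varphi_n d^{n-1}$, and since $T^{\beta_n}z=z-\mu\beta_n F(z)$, Lemma \ref{yamada} (with constant $\tau\in(0,1]$) gives
\[
\|y^n-z\|\le(1-\beta_n\tau)\|x^n-z\|+\mu\beta_n\bigl(\|F(z)\|+\varphi_n D\bigr).
\]
On the other hand, $T_\sigma$ is a cutter by Lemma \ref{lemma-CC12}(ii), hence $1$-strongly quasi-nonexpansive by Fact \ref{fact2}(ii), so its relaxation $T_{\sigma,\lambda_n}$ with $\lambda_n\in[\varepsilon,2-\varepsilon]\subset(0,2)$ is quasi-nonexpansive; since $T_m$ is a cutter it too is quasi-nonexpansive, and therefore
\[
\|x^{n+1}-z\|=\|T_m\bigl(T_{\sigma,\lambda_n}y^n\bigr)-z\|\le\|T_{\sigma,\lambda_n}y^n-z\|\le\|y^n-z\|.
\]
Since $\varphi_n\le\tfrac12\le1$ for $n\ge N$, combining the two displays yields $\|x^{n+1}-z\|\le(1-\beta_n\tau)\|x^n-z\|+\mu\beta_n M'$ for all $n\ge N$, with $M':=\|F(z)\|+D$; as $\beta_n\tau\in(0,1]$, a routine induction gives $\|x^n-z\|\le\max\{\|x^N-z\|,\,\mu M'/\tau\}$ for all $n\ge N$, and the finitely many earlier iterates finish the argument.

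\emph{Bounding $\{y^n\}$ and the main obstacle.} Finally $\|y^n-z\|\le\|x^n-z\|+\mu\beta_n\|d^n\|\le\sup_k\|x^k-z\|+\mu D$, so $\{y^n\}_{n=1}^\infty$ is bounded. The step I expect to be the real difficulty is precisely this ordering issue: $d^n$ enters the update of $x^{n+1}$ while $d^{n+1}$ in turn depends on $F(x^{n+1})$, so a direct quasi-Fej\'er estimate for $\{x^n\}$ would be circular; the cheap a priori bound on $\{d^n\}$ is what lets the term $\mu\beta_n\varphi_n d^{n-1}$ collapse into a harmless perturbation inside Yamada's contraction estimate. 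A secondary point that needs care is checking that the anchor $z$ is genuinely fixed both by the extrapolated composite operator $T_{\sigma,\lambda_n}$ and by $T_m$, which is exactly where Lemma \ref{lemma-CC12} and the identity $\fix T=\fix T_\sigma$ are used.
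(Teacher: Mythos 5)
Your proposal is correct and follows essentially the same route as the paper: first bound $\{d^n\}$ by the recursion $\|d^{n+1}\|\le M+\tfrac12\|d^n\|$ once $\varphi_n\le\tfrac12$, then combine the quasi-nonexpansivity of $T_m\circ T_{\sigma,\lambda_n}$ (which the paper derives by expanding the square using the cutter property of $T_\sigma$, exactly the fact you invoke) with Yamada's contraction estimate applied to the decomposition $y^n=(Id-\mu\beta_nF)x^n+\mu\beta_n\varphi_nd^{n-1}$, and close with the standard induction $\|x^{n+1}-z\|\le(1-\beta_n\tau)\|x^n-z\|+\beta_n\tau(\mu M'/\tau)$. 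The only cosmetic difference is that you cite the quasi-nonexpansivity of relaxed cutters as a known fact where the paper computes it inline, and you correctly identify the ordering of the three bounds as the key point.
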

\begin{proof}
	Assume that $\{ F({x^n})\} _{n = 1}^\infty$ is bounded. We first show that $\{d^n\}_{n=1}^\infty$ is bounded. Accordingly, the assumption $\mathop{\lim}\limits_{n\to\infty}\varphi_n=0$ yields that there exists $n_0\in\mathbb{N}$ such that $\varphi_n\le\frac{1}{2}$ for all $n\ge n_0$. Due to the boundedness of $\{F(x^n)\}$, we set $M_1:=\mathop{\sup}\limits_{n\ge1}\|F(x^n)\|<\infty$ and $M_2:=\max\{M_1,\|d^{n_0}\|\}$. It is obvious to see that $\|d^{n_0}\|\le 2M_2$. By the definition of $\{d^n\}_{n=1}^\infty$, for all $n\ge n_0$, we have
	\begin{eqnarray}
	\|d^{n+1}\|
	\le  \|-F(x^{n+1})\|+\varphi_{n+1}\|d^n\|
	\le  \|F(x^{n+1})\|+\frac{1}{2}\|d^n\|
	\le M_2+\frac{1}{2}\|d^n\|.\label{4.1}
	\end{eqnarray}
	Now, we claim that $\|d^n\|\le2M_2$ for all $n\ge n _0$. 
	For $n=n_0$, we immediately get $\|d^n\|=\|d^{n_0}\|\le2M_2$. 
	Let $n\ge n_0$ and $\|d^n\|\le2M_2$. We shall prove that $\|d^{n+1}\|\le2M_2.$
	By (\ref{4.1}), we have 
	$$\|d^{n+1}\|\le M_2+\frac{1}{2}\|d^n\|\le 2M_2.$$
	Thus $\|d^n\|\le2M_2$ for all $n\ge n_0$.
	Putting $M^*:=\max\{\|d^1\|,\|d^2\|,\ldots,\|d^{n_0-1}\|,2M_2\}$, we obtain that
	$\|d^n\|\le M^*,$ for all $n\in\mathbb{N}.$         
	Therefore $\{d^n\}_{n=1}^\infty$ is bounded.

	Next, we will show that $\{ {x^n}\} _{n = 1}^\infty$ is bounded. Let $\bar u\in \bigcap\limits_{i = 1}^m \fix{{T_i}}$ be given. According to Lemma \ref{lemma-CC12}(ii), it is worth noting here that $T_\sigma$ is a cutter. By utilizing the quasi-nonexpansivity of $T_m$ and the properties of $T_\sigma$ in Fact \ref{fact2}, for all $n\in\mathbb{N}$, we have
	\begin{eqnarray}
	\|{x^{n + 1}} - \bar u{\|^2} &=& \|{T_m}({y^n} + {\lambda _n}\sigma ({y^n})(T{y^n} - {y^n})) - \bar u{\|^2}\nonumber\\
	&\le& \|{y^n} + {\lambda _n}\sigma ({y^n})(T{y^n} - {y^n}) - \bar u{\|^2}\nonumber\\
	&=& \|{y^n} - \bar u{\|^2} + \lambda _n^2\|\sigma ({y^n})(T{y^n} - {y^n}){\|^2} \nonumber\\
	&&+ 2{\lambda _n}\langle {y^n} - \bar u,\sigma ({y^n})(T{y^n} - {y^n})\rangle\nonumber \\
	&=& \|{y^n} - \bar u{\|^2} + \lambda _n^2\|{T_\sigma }{y^n} - {y^n}{\|^2} + 2{\lambda _n}\langle {y^n} - \bar u,{T_\sigma }{y^n} - {y^n}\rangle\nonumber \\
	&\le& \|{y^n} - \bar u{\|^2} + \lambda _n^2\|{T_\sigma }{y^n} - {y^n}{\|^2} - 2{\lambda _n}\|{T_\sigma }{y^n} - {y^n}{\|^2}\nonumber\\
	&=& \|{y^n} - \bar u{\|^2} - {\lambda _n}(2 - {\lambda _n})\|{T_\sigma }{y^n} - {y^n}{\|^2}.\label{lem1-3}
	\end{eqnarray}
	Since $\{ \lambda _{n}\}_{n = 1}^\infty \subset [\varepsilon ,2 - \varepsilon]$ for some constant $\varepsilon \in (0,1)$, we obtain that
	\begin{equation}\label{lem1-4}
	\|{x^{n + 1}} - \bar u\| \le \|{y^n} - \bar u\|.
	\end{equation}
	Then, for all $n\ge2$, we have
	\begin{eqnarray}
	\|{y^n} - \bar u\| &=& \|{x^n} + \mu {\beta _{n}}{d^n} - \bar u\|\nonumber\\
	&=& \|{x^n} + \mu {\beta _{n}}( - F({x^n}) + {\varphi _n}{d^{n - 1}}) - \bar u\|\nonumber\\
	&=& \|({x^n} - \mu {\beta _{n}}F({x^n})) - (\bar u - \mu {\beta _{n }}F(\bar u)) + \mu {\beta _{n }}({\varphi _n}{d^{n - 1}} - F(\bar u))\|\label{lem1-5}\nonumber\\
	&\le& \|({x^n} - \mu {\beta _{n}}F({x^n})) - (\bar u - \mu {\beta _{n }}F(\bar u))\| + \mu {\beta _{n }}\|{\varphi _n}{d^{n - 1}} - F(\bar u)\|.\label{lem1-4*}
	\end{eqnarray}
	By using the inequalities (\ref{lem1-4}), (\ref{lem1-4*}) and Lemma \ref{yamada}, for all $n\ge2$, we obtain
	\begin{equation}
	\|{x^{n+1}} - \bar u\| \le (1 - {\beta _{n}}\tau )\|{x^n} - \bar u\| + \mu {\beta _{n}}\|{\varphi _n}{d^{n - 1}} - F(\bar u)\|,\nonumber
	\end{equation}
	where $\tau = 1 - \sqrt {1 + {\mu ^2}{\kappa ^2} - 2\mu \eta }  \in (0,1].$
	Accoding to the boundedness of $\{d^{n}\}_{n = 1}^\infty$, we set ${M_3}: = \mathop {\sup }\limits_{n \ge 1} \|{\varphi _n}{d^{n - 1}} - F(\bar u)\| < \infty$ and $M: = \max \{ {M_3},\|F(\bar u)\|\}$. The inequality above becomes
	\begin{equation}\label{1v}
	\|{x^{n + 1}} - \bar u\| \le (1 - {\beta _{n}}\tau )\|{x^n} - \bar u\| + {\beta _{n}}\tau \left( {\frac{{\mu M}}{\tau }} \right) \text{      for all $n\ge 2$.     }
	\end{equation}
	However, one can easily check that the inequality (\ref{1v}) also holds true for $n=1$. In the light of induction, we ensure that
	\[\|{x^n} - \bar u\| \le \max \{ \|{x^1} - \bar u\|,\frac{{\mu M}}{\tau }\} \text{     for all $n\in\mathbb{N}$.     }\]
	Thus $\{x^{n}\}_{n = 1}^\infty$ is bounded as desired. Consequently,  $\{y^{n}\}_{n = 1}^\infty$ is also bounded. 
	\qed
\end{proof}

Before continuing the analysis, for $n\in\mathbb{N}$ and $\bar u\in \bigcap\limits_{i = 1}^m \fix{{T_i}}$, let us denote the following terms:
\[\xi_n:={\mu ^2}\beta _{n}^2\|{d^n}{\|^2} + 2\mu {\beta _{n}}\|{x^n} - \bar u\|\|{d^n}\|\indent \text{      and         }\indent {\alpha _n}: = {\beta _{n}}\tau.\]
In particular, for $n\ge2$, we denote
\[{\delta _n}: = \frac{{2\mu }}{\tau }\left( {{\varphi _n}\langle {y^n} - \bar u,{d^{n - 1}}\rangle  + \langle {y^n} - \bar u, - F(\bar u)\rangle } \right).\]

The aforementioned notations give rise to the following lemmas which demonstate some crucial inequalities needed in proving our main convergence result.



\begin{lemma}\label{lemma2}Let the sequences $\{ {x^n}\} _{n = 1}^\infty$, $\{ {y^n}\} _{n = 1}^\infty$ and  $\{ {d^n}\} _{n = 1}^\infty$ be given by Algorithm \ref{algorithm}. Suppose that $\{ \lambda _{n}\}_{n = 1}^\infty \subset [\varepsilon ,2 - \varepsilon]$ for some constant $\varepsilon \in (0,1)$. Then, for all $n\in\mathbb{N}$ and $\bar u\in \bigcap\limits_{i = 1}^m \fix{{T_i}}$, there holds:
	\begin{eqnarray*}
		\|{x^{n + 1}} - \bar u{\|^2} \le \|{x^n} - \bar u{\|^2} - \frac{{{\lambda _n}(2 - {\lambda _n})}}{{4m}}\sum\limits_{i = 1}^m {\parallel {S_i}{y^n} - {S_{i - 1}}{y^n}{\parallel ^2}}  + {\xi _n}.
	\end{eqnarray*}
\end{lemma}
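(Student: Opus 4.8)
The plan is to build on inequality (\ref{lem1-3}), which was already derived inside the proof of Lemma \ref{lemma1} and holds for every $n\in\mathbb{N}$ and every $\bar u\in\bigcap_{i=1}^m\fix{T_i}$, namely
\[
\|x^{n+1}-\bar u\|^2 \le \|y^n-\bar u\|^2 - \lambda_n(2-\lambda_n)\,\|T_\sigma y^n - y^n\|^2 ,
\]
whose derivation uses only the quasi-nonexpansivity of $T_m$ and the fact that $T_\sigma$ is a cutter (Lemma \ref{lemma-CC12}(ii)). From here the proof splits into two estimates: first, a lower bound $\|T_\sigma y^n-y^n\|^2\ge\frac{1}{4m}\sum_{i=1}^m\|S_iy^n-S_{i-1}y^n\|^2$; and second, the upper bound $\|y^n-\bar u\|^2\le\|x^n-\bar u\|^2+\xi_n$.

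For the lower bound I would distinguish two cases. If $y^n\notin\bigcap_{i=1}^m\fix{T_i}$ — which, since for cutters with nonempty common fixed-point set one has $\fix T=\bigcap_{i=1}^m\fix{T_i}$, is exactly the branch where $\sigma$ is given by (\ref{sigma}) and Lemma \ref{lemma-CC12}(i) applies — I write $T_\sigma y^n-y^n=\sigma(y^n)(Ty^n-y^n)$, so that $\|T_\sigma y^n-y^n\|^2=\sigma(y^n)\bigl(\sigma(y^n)\|Ty^n-y^n\|^2\bigr)$. Multiplying the first inequality of Lemma \ref{lemma-CC12}(i) through by $\|Ty^n-y^n\|^2$ gives $\sigma(y^n)\|Ty^n-y^n\|^2\ge\frac12\sum_{i=1}^m\|S_iy^n-S_{i-1}y^n\|^2$, and the second inequality gives $\sigma(y^n)\ge\frac1{2m}$; combining the two yields the claimed bound. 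If instead $y^n\in\bigcap_{i=1}^m\fix{T_i}$, then $S_iy^n=y^n$ for every $i$, the sum $\sum_{i=1}^m\|S_iy^n-S_{i-1}y^n\|^2$ is zero, and the inequality is trivial. In either case, using $\lambda_n(2-\lambda_n)>0$ (valid since $\lambda_n\in[\varepsilon,2-\varepsilon]$), I get
\[
\|x^{n+1}-\bar u\|^2 \le \|y^n-\bar u\|^2 - \frac{\lambda_n(2-\lambda_n)}{4m}\sum_{i=1}^m\|S_iy^n-S_{i-1}y^n\|^2 .
\]

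For the second estimate I simply expand $y^n=x^n+\mu\beta_n d^n$, obtaining $\|y^n-\bar u\|^2=\|x^n-\bar u\|^2+2\mu\beta_n\langle x^n-\bar u,d^n\rangle+\mu^2\beta_n^2\|d^n\|^2$, and bound the cross term by the Cauchy–Schwarz inequality, $\langle x^n-\bar u,d^n\rangle\le\|x^n-\bar u\|\,\|d^n\|$, which produces precisely $\|y^n-\bar u\|^2\le\|x^n-\bar u\|^2+\xi_n$. Substituting this into the previous display gives the assertion. I do not expect any genuine obstacle here; the only points requiring a little care are getting the case distinction right (recalling $\fix T=\bigcap_{i=1}^m\fix{T_i}$ so that Lemma \ref{lemma-CC12}(i) is applicable exactly on the relevant branch) and combining the two inequalities of Lemma \ref{lemma-CC12}(i) in the correct order, contributing one factor $\sigma(y^n)$ each.
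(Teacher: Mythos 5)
Your proposal is correct and follows essentially the same route as the paper: it starts from inequality (\ref{lem1-3}), bounds $\|y^n-\bar u\|^2$ by $\|x^n-\bar u\|^2+\xi_n$ via Cauchy--Schwarz, and lower-bounds $\|T_\sigma y^n-y^n\|^2=\sigma^2(y^n)\|Ty^n-y^n\|^2$ by $\frac{1}{4m}\sum_{i=1}^m\|S_iy^n-S_{i-1}y^n\|^2$ using both parts of Lemma \ref{lemma-CC12}(i). Your factorization of $\sigma^2(y^n)\|Ty^n-y^n\|^2$ into two factors, each controlled by one of the two inequalities in that lemma, together with the explicit treatment of the trivial case $y^n\in\bigcap_{i=1}^m\fix T_i$, is only a cosmetic (if slightly tidier) rearrangement of the paper's computation.
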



\begin{proof}
	By invoking the inequality (\ref{lem1-3}) and the definition of $T_\sigma$, we have 
	\begin{eqnarray}
	\|{x^{n + 1}} - \bar u{\|^2} &\le& \|{y^n} - \bar u{\|^2} - {\lambda _n}(2 - {\lambda _n})\|{T_\sigma }{y^n} - {y^n}{\|^2}\nonumber\\
	&\le& \|{x^n} + \mu {\beta _{n}}{d^n} - \bar u{\|^2} - {\lambda _n}(2 - {\lambda _n})\|{T_\sigma }{y^n} - {y^n}{\|^2}\nonumber\\
	&=& \|{x^n} - \bar u{\|^2} + {\mu ^2}\beta _{n }^2\|{d^n}{\|^2} + 2\mu {\beta _{n }}\langle {x^n} - \bar u,{d^n}\rangle\nonumber\\
	&&  - {\lambda _n}(2 - {\lambda _n})\|{T_\sigma }{y^n} - {y^n}{\|^2}\nonumber\\
	&\le& \|{x^n} - \bar u{\|^2} + {\mu ^2}\beta _{n }^2\|{d^n}{\|^2} + 2\mu\beta_n\|{x^n} - \bar u\|\|{d^n}\|\nonumber\\
	&& - {\lambda _n}(2 - {\lambda _n})\|{T_\sigma }{y^n} - {y^n}{\|^2}\nonumber\\
	&=& \|{x^n} - \bar u{\|^2} - {\lambda _n}(2 - {\lambda _n})\|{T_\sigma }{y^n} - {y^n}{\|^2} + {\xi _n}\nonumber\\
	&=& \|{x^n} - \bar u{\|^2} - {\lambda _n}(2 - {\lambda _n})\sigma^2 ({y^n})\|T{y^n} - {y^n}{\|^2} + {\xi _n}.\nonumber
	\end{eqnarray}
	Thanks to Lemma \ref{lemma-CC12}(i), we finally have
	{\small	\begin{eqnarray}
		\|{x^{n + 1}} - \bar u{\|^2} &\le& \|{x^n} - \bar u{\|^2} - {\lambda _n}(2 - {\lambda _n})\frac{{\frac{1}{4}{{\left( {\sum\limits_{i = 1}^m {\|{S_i}{y^n} - {S_{i - 1}}{y^n}{\|^2}} } \right)}^2}}}{{\|T{y^n} - {y^n}{\|^4}}}\|T{y^n} - {y^n}{\|^2} + {\xi _n}\nonumber\\
		&=& \|{x^n} - \bar u{\|^2} - {\lambda _n}(2 - {\lambda _n})\frac{{\frac{1}{4}{{\left( {\sum\limits_{i = 1}^m {\|{S_i}{y^n} - {S_{i - 1}}{y^n}{\|^2}} } \right)}^2}}}{{\|T{y^n} - {y^n}{\|^2}}} + {\xi _n}\nonumber\\
		&=& \|{x^n} - \bar u{\|^2} - \frac{{{\lambda _n}(2 - {\lambda _n})}}{{4m}}\sum\limits_{i = 1}^m {\|{S_i}{y^n} - {S_{i - 1}}{y^n}{\|^2}}  + {\xi _n},\nonumber
		\end{eqnarray}
	}
	which completes the proof.
	\qed
\end{proof}

\begin{lemma}\label{lemma4} Let the sequences $\{ {x^n}\} _{n = 1}^\infty$, $\{ {y^n}\} _{n = 1}^\infty$ and  $\{ {d^n}\} _{n = 1}^\infty$ be given by Algorithm \ref{algorithm}. Suppose that $\{ \lambda _{n}\}_{n = 1}^\infty \subset [\varepsilon ,2 - \varepsilon]$ for some constant $\varepsilon \in (0,1)$. Then, for all $n\ge2$ and $\bar u\in \bigcap\limits_{i = 1}^m \fix{{T_i}}$, there holds:
	\begin{eqnarray*}
		\|{x^{n + 1}} - \bar u{\|^2} \le (1 - {\alpha _n})\|{x^n} - \bar u{\|^2} + {\alpha _n}{\delta _n}.
	\end{eqnarray*}
	
\end{lemma}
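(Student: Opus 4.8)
The plan is to leverage the estimates already established within the proof of Lemma~\ref{lemma1}. From inequality (\ref{lem1-3}) and the fact that $\lambda_n(2-\lambda_n) \ge 0$ (which holds because $\lambda_n \in [\varepsilon, 2-\varepsilon]$), we immediately have $\|x^{n+1} - \bar u\|^2 \le \|y^n - \bar u\|^2$ for every $n \in \mathbb{N}$. Consequently, it is enough to prove that $\|y^n - \bar u\|^2 \le (1-\alpha_n)\|x^n - \bar u\|^2 + \alpha_n \delta_n$ for all $n \ge 2$.

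To this end, I would decompose $y^n - \bar u$ using $y^n = x^n + \mu\beta_n d^n$ together with the update rule $d^n = -F(x^n) + \varphi_n d^{n-1}$ (valid for $n \ge 2$), writing
$$y^n - \bar u = \bigl(T^{\beta_n}x^n - T^{\beta_n}\bar u\bigr) + \mu\beta_n\bigl(\varphi_n d^{n-1} - F(\bar u)\bigr),$$
where $T^{\beta_n} := Id - \mu\beta_n F$ is the operator from Lemma~\ref{yamada}. Denoting the two summands by $a$ and $b$, the crucial step is to expand as $\|a+b\|^2 = \|a\|^2 + 2\langle a+b,b\rangle - \|b\|^2 \le \|a\|^2 + 2\langle y^n - \bar u, b\rangle$; discarding the nonpositive term $-\|b\|^2$ here is precisely what prevents an unwanted $O(\beta_n^2)$ remainder from surviving into the recursion.

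Then I would apply Lemma~\ref{yamada}: since $\mu \in (0, 2\eta/\kappa^2)$ and $\beta_n \in (0,1]$, we get $\|a\| \le (1-\beta_n\tau)\|x^n - \bar u\|$, and because $0 \le 1 - \beta_n\tau \le 1$ we may further bound $\|a\|^2 \le (1-\beta_n\tau)^2\|x^n - \bar u\|^2 \le (1-\beta_n\tau)\|x^n - \bar u\|^2 = (1-\alpha_n)\|x^n - \bar u\|^2$. For the cross term, expanding $b$ and recalling $2\mu\beta_n = \tfrac{2\mu}{\tau}\alpha_n$ gives $2\langle y^n - \bar u, b\rangle = \alpha_n \cdot \tfrac{2\mu}{\tau}\bigl(\varphi_n\langle y^n - \bar u, d^{n-1}\rangle + \langle y^n - \bar u, -F(\bar u)\rangle\bigr) = \alpha_n\delta_n$. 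Combining these with the reduction from the first paragraph yields the asserted inequality.

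The main obstacle, which is in fact mild, is purely bookkeeping: one must drop the $-\|b\|^2$ term at the right moment so the recursion retains the clean form $(1-\alpha_n)(\cdot)+\alpha_n\delta_n$, use the elementary estimate $(1-t)^2 \le 1-t$ for $t \in [0,1]$ with $t = \beta_n\tau \in (0,1]$, and correctly match the constant $2\mu\beta_n$ against $\tau$ so that $\alpha_n$ factors out of the inner-product terms that constitute $\delta_n$. No demi-closedness, extrapolation, or step-size assumptions beyond $\lambda_n \in [\varepsilon, 2-\varepsilon]$ are needed at this stage.
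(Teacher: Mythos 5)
Your proposal is correct and follows essentially the same route as the paper's proof: reduce to $\|y^n-\bar u\|^2$ via (\ref{lem1-4}), split $y^n-\bar u$ into the $Id-\mu\beta_nF$ contraction part plus $\mu\beta_n(\varphi_n d^{n-1}-F(\bar u))$, apply the inequality $\|a+b\|^2\le\|a\|^2+2\langle a+b,b\rangle$ together with Lemma \ref{yamada}, and identify the cross term with $\alpha_n\delta_n$. The only (harmless) difference is that you make explicit the elementary bound $(1-\beta_n\tau)^2\le 1-\beta_n\tau$, which the paper uses implicitly.
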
	
\begin{proof}
	By utilizing the inequalities (\ref{lem1-4}), (\ref{lem1-5}), the fact that $\|x+y\|^2\le\|x\|^2+2\langle y, x+y\rangle$, for all $x,y\in\Hi$, and Lemma \ref{yamada}, for all $n\ge2$, we have
	\begin{eqnarray}
	\|{x^{n + 1}} - \bar u{\|^2} &\le& \|{y^n} - \bar u{\|^2}\nonumber\\
	&\le& \|({x^n} - \mu {\beta _{n}}F({x^n})) - (\bar u - \mu {\beta _{n }}F(\bar u)) + \mu {\beta _{n}}({\varphi _n}{d^{n - 1}} - F(\bar u)){\|^2}\nonumber\\
	&\le& \|({x^n} - \mu {\beta _{n }}F({x^n})) - (\bar u - \mu {\beta _{n }}F(\bar u)){\|^2}  \nonumber \\
	&&+ 2\langle {x^n} - \mu {\beta _{n }}F({x^n}) - \bar u+ \mu {\beta _{n }}{\varphi _n}{d^{n - 1}},\mu {\beta _{n }}({\varphi _n}{d^{n - 1}} - F(\bar u))\rangle\nonumber \\
	&\le& (1 - {\beta _{n }}\tau )\|{x^n} - \bar u{\|^2} \nonumber\\
	&&+ 2\mu {\beta _{n }}\langle {x^n} + \mu {\beta _{n }}( - F({x^n}) + {\varphi _n}{d^{n - 1}}) - \bar u,{\varphi _n}{d^{n - 1}} - F(\bar u)\rangle\nonumber \\
	&=& (1 - {\beta _{n}}\tau )\|{x^n} - \bar u{\|^2} + 2\mu {\beta _{n}}\langle {y^n} - \bar u,{\varphi _n}{d^{n - 1}} - F(\bar u)\rangle \nonumber\\
	&=& (1 - {\beta _{n}}\tau )\|{x^n} - \bar u{\|^2} + 2\mu {\beta _{n}}{\varphi _n}\langle {y^n} - \bar u,{d^{n - 1}}\rangle  + 2\mu {\beta _{n}}\langle {y^n} - \bar u, - F(\bar u)\rangle\nonumber \\
	&=& (1 - {\beta _{n}}\tau )\|{x^n} - \bar u{\|^2} + {\beta _{n}}\tau \left[ {\frac{{2\mu }}{\tau }\left( {{\varphi _n}\langle {y^n} - \bar u,{d^{n - 1}}\rangle  + \langle {y^n} - \bar u, - F(\bar u)\rangle } \right)} \right]\nonumber\\
	&=& (1 - {\alpha _n})\|{x^n} - \bar u{\|^2} + {\alpha _n}{\delta _n},\nonumber
	\end{eqnarray}
	which completes the proof.
	\qed
\end{proof}


We present the following lemma which is an important tool for proving our main result. A proof of the lemma can be found in \cite[Lemma 2.5]{X02}. 
\begin{lemma}\label{xu}
	Let $\{ {a_n}\} _{n = 1}^\infty$ be a sequence of nonnegative real numbers such that
	${a_{n + 1}} \le (1 - {\alpha _n}){a_n} + {\alpha _n}{\delta _n},$
	where the sequences $\{ {\alpha _n}\} _{n = 1}^\infty\subset [0,1]$ and $\{ {\delta _n}\} _{n = 1}^\infty\subset\mathbb{R}$ satisfy
	$\sum\limits_{n = 1}^\infty  {{\alpha _n}}  = \infty$ and $\limsup\limits_{n\rightarrow0}\delta_n\le 0$. Then $\mathop {\lim }\limits_{n \to \infty } {a_n} = 0$.
\end{lemma}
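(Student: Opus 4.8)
The plan is to derive the convergence from the elementary inequality $1-t \le e^{-t}$ (valid for all $t \ge 0$) combined with a standard $\varepsilon$--argument that turns the hypothesis $\limsup_{n\to\infty}\delta_n \le 0$ into a usable bound on the right-hand side of the recursion.

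First I would fix an arbitrary $\varepsilon > 0$. Because $\limsup_{n\to\infty}\delta_n \le 0$, there is an index $N\in\mathbb{N}$ such that $\delta_n \le \varepsilon$ for every $n \ge N$. Substituting this into the recursion and using $\alpha_n\in[0,1]$ gives, for all $n\ge N$,
\[
a_{n+1} - \varepsilon \;\le\; (1-\alpha_n)a_n + \alpha_n\varepsilon - \varepsilon \;=\; (1-\alpha_n)(a_n-\varepsilon),
\]
which is the crucial homogeneous inequality; note that it is legitimate to iterate it only because $1-\alpha_n\ge 0$. Writing $b_n := a_n - \varepsilon$, this reads $b_{n+1}\le(1-\alpha_n)b_n$ for $n\ge N$.

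Next I would split into two cases according to the sign of $b_n$. If $b_n\le 0$ for some $n\ge N$, then $b_{n+1}\le(1-\alpha_n)b_n\le 0$, and by induction $b_m\le 0$, i.e.\ $a_m\le\varepsilon$, for all $m\ge n$; hence $\limsup_{m\to\infty}a_m\le\varepsilon$. Otherwise $b_n>0$ for all $n\ge N$, and iterating the homogeneous inequality yields $0<b_{n+1}\le\big(\prod_{k=N}^{n}(1-\alpha_k)\big)b_N$. Applying $1-\alpha_k\le e^{-\alpha_k}$ bounds the product above by $\exp\!\big(-\sum_{k=N}^{n}\alpha_k\big)$, which tends to $0$ as $n\to\infty$ because $\sum_{n=1}^\infty\alpha_n=\infty$; thus $b_n\to 0$ and again $\limsup_{n\to\infty}a_n\le\varepsilon$. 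Since $\varepsilon>0$ was arbitrary, $\limsup_{n\to\infty}a_n\le 0$, and because $a_n\ge 0$ this forces $\lim_{n\to\infty}a_n=0$.

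There is no genuine obstacle here; the only point requiring a little care is the sign bookkeeping in the iteration step — multiplying by the factors $1-\alpha_k\in[0,1]$ preserves an inequality only when the quantity being multiplied keeps a fixed sign, which is exactly why I separate the case in which $b_n$ has already dropped to or below $0$ from the case in which $b_n$ stays strictly positive, rather than attempting a single uniform estimate. Everything else is a routine telescoping computation.
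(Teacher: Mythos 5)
Your proof is correct. The paper itself does not prove this lemma --- it simply cites \cite[Lemma 2.5]{X02} --- but your argument is essentially the standard one found there: fix $\varepsilon>0$, absorb $\delta_n\le\varepsilon$ into the recursion to get the homogeneous inequality $a_{n+1}-\varepsilon\le(1-\alpha_n)(a_n-\varepsilon)$, and kill the telescoped product via $\prod_{k=N}^{n}(1-\alpha_k)\le\exp\bigl(-\sum_{k=N}^{n}\alpha_k\bigr)\to 0$. (Your case split on the sign of $b_n$ is harmless but not strictly necessary: multiplying the inequality $b_n\le(1-\alpha_{n-1})b_{n-1}$ by the nonnegative factor $1-\alpha_n$ is valid regardless of the sign of $b_n$, so the single estimate $b_{n+1}\le\prod_{k=N}^{n}(1-\alpha_k)\,b_N\le\prod_{k=N}^{n}(1-\alpha_k)\,|b_N|$ already yields $\limsup_n b_n\le 0$ in all cases. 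Also note the statement's ``$\limsup_{n\to 0}$'' is a typo for $\limsup_{n\to\infty}$, which you correctly read.)
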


The following theorem is our main convergence result.

\begin{theorem}\label{main-thm} Let the sequence $\{ {x^n}\} _{n = 1}^\infty$ be given by Algorithm \ref{algorithm}. Suppose that $\mathop {\lim }\limits_{n \to \infty }{\beta _n} = 0$, $\sum\limits_{n = 1}^\infty  {{\beta _n}}= \infty$, $\mathop {\lim }\limits_{n \to \infty } {\varphi _n} = 0$, and $\{ \lambda _{n}\}_{n = 1}^\infty \subset [\varepsilon ,2 - \varepsilon]$ for some constant $\varepsilon \in (0,1)$. If $\{ F({x^n})\} _{n = 1}^\infty$ is bounded and  $\{ {T_i}\} _{i = 1}^m$ satisfies the DC principle, then the sequence $\{ {x^n}\} _{n = 1}^\infty$ converges strongly to $\bar u$, the unique solution of Problem \ref{Problem-VIP}.
\end{theorem}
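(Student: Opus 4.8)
The plan is to combine the two recursive estimates from Lemmas~\ref{lemma2} and \ref{lemma4} with a standard Xu-type argument (Lemma~\ref{xu}), the key being to verify that $\limsup_{n\to\infty}\delta_n\le 0$. First I would invoke Lemma~\ref{lemma1} to conclude that $\{x^n\}$, $\{y^n\}$ and $\{d^n\}$ are all bounded; in particular $\{y^n-\bar u\}$ and $\{d^{n-1}\}$ are bounded, so $\{\delta_n\}$ is a bounded sequence. Since $\sum_{n=1}^\infty\beta_n=\infty$ and $\tau\in(0,1]$, we have $\sum_{n=1}^\infty\alpha_n=\sum_{n=1}^\infty\beta_n\tau=\infty$, and since $\beta_n\to 0$ we have $\alpha_n\to 0$, so $\{\alpha_n\}\subset[0,1]$ for $n$ large; thus the hypotheses of Lemma~\ref{xu} are in place once the $\limsup$ condition on $\delta_n$ is secured.

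The heart of the argument is therefore to show $\limsup_{n\to\infty}\delta_n\le 0$, where
\[
\delta_n=\frac{2\mu}{\tau}\Big(\varphi_n\langle y^n-\bar u,d^{n-1}\rangle+\langle y^n-\bar u,-F(\bar u)\rangle\Big).
\]
The first term vanishes in the limit since $\varphi_n\to 0$ and $\langle y^n-\bar u,d^{n-1}\rangle$ is bounded. For the second term I would argue by contradiction along a subsequence: pick $\{x^{n_k}\}$ with $\limsup_n\langle y^n-\bar u,-F(\bar u)\rangle=\lim_k\langle y^{n_k}-\bar u,-F(\bar u)\rangle$ and, after passing to a further subsequence, assume $x^{n_k}\rightharpoonup p$ for some $p\in\Hi$ (possible by boundedness). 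Since $y^n-x^n=\mu\beta_n d^n\to 0$, also $y^{n_k}\rightharpoonup p$. The crucial step is to prove $p\in\bigcap_{i=1}^m\fix T_i$. To do this I would feed Lemma~\ref{lemma2} through a telescoping/standard argument: if $\|x^{n+1}-\bar u\|$ does not go to zero one still extracts, using $\xi_n\to 0$ (because $\beta_n\to 0$ and $\{d^n\},\{x^n-\bar u\}$ are bounded) and $\lambda_n(2-\lambda_n)\ge\varepsilon^2$, that $\sum_{i=1}^m\|S_i y^{n_k}-S_{i-1}y^{n_k}\|^2\to 0$ along the chosen subsequence; equivalently each $\|S_i y^{n_k}-S_{i-1}y^{n_k}\|\to 0$. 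Writing $T_i S_{i-1}y^{n_k}-S_{i-1}y^{n_k}=S_i y^{n_k}-S_{i-1}y^{n_k}\to 0$ and noting $S_{i-1}y^{n_k}\rightharpoonup p$ (proved inductively: $S_0 y^{n_k}=y^{n_k}\rightharpoonup p$, and $S_i y^{n_k}-S_{i-1}y^{n_k}\to 0$ propagates weak convergence up the chain), the DC principle for $T_i$ gives $T_i p=p$ for each $i$, hence $p\in\bigcap_{i=1}^m\fix T_i$.

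Once $p\in\bigcap_{i=1}^m\fix T_i$ is established, the variational inequality defining $\bar u$ yields $\langle F(\bar u),p-\bar u\rangle\ge 0$, i.e. $\langle y^{n_k}-\bar u,-F(\bar u)\rangle\to\langle p-\bar u,-F(\bar u)\rangle\le 0$, so $\limsup_{n\to\infty}\langle y^n-\bar u,-F(\bar u)\rangle\le 0$ and therefore $\limsup_{n\to\infty}\delta_n\le 0$. Applying Lemma~\ref{xu} to $a_n:=\|x^n-\bar u\|^2$ with the recursion of Lemma~\ref{lemma4} gives $\|x^n-\bar u\|^2\to 0$, i.e. $x^n\to\bar u$. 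Uniqueness of $\bar u$ is already guaranteed by \cite[Theorem 2.3.3]{FP03}. I expect the main obstacle to be the subsequential bookkeeping in the contradiction step: one must be careful that the subsequence extracted to realize the $\limsup$ of $\delta_n$ is the same one along which $\sum_i\|S_iy^n-S_{i-1}y^n\|^2\to 0$, which requires re-running the Lemma~\ref{lemma2} estimate on that subsequence and using $\alpha_n\to 0$ to kill the $\|x^n-\bar u\|^2$ terms — a careful but routine argument once the right subsequence is fixed.
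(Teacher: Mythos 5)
Your overall architecture (boundedness from Lemma~\ref{lemma1}, the two recursions from Lemmas~\ref{lemma2} and \ref{lemma4}, weak cluster points forced into $\bigcap_{i=1}^m\fix T_i$ via the DC principle applied up the chain $S_0,S_1,\dots,S_m$, then Lemma~\ref{xu}) matches the paper's. But the step you defer as ``careful but routine'' is exactly where the proof can fail, and your proposed fix does not work. To get $\sum_{i=1}^m\|S_iy^{n_k}-S_{i-1}y^{n_k}\|^2\to0$ along the subsequence realizing $\limsup_n\langle y^n-\bar u,-F(\bar u)\rangle$, you invoke a ``telescoping/standard argument'' from Lemma~\ref{lemma2}. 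Telescoping that inequality gives
\[
\frac{\varepsilon^2}{4m}\sum_{n=1}^{N}\sum_{i=1}^m\|S_iy^{n}-S_{i-1}y^{n}\|^2\le a_1-a_{N+1}+\sum_{n=1}^{N}\xi_n,
\]
and the right-hand side is useless here: $\xi_n$ contains the term $2\mu\beta_n\|x^n-\bar u\|\,\|d^n\|=O(\beta_n)$, and the theorem assumes $\sum_n\beta_n=\infty$, so $\sum_n\xi_n$ may diverge. Knowing only $\xi_n\to0$, Lemma~\ref{lemma2} forces the residuals to be small precisely at indices where $a_n$ does not decrease; at indices where $a_n$ drops, the residual can stay bounded away from zero. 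Since the subsequence $\{n_k\}$ realizing the $\limsup$ is chosen with no reference to the behaviour of $a_n$, nothing prevents it from living entirely in the ``decreasing'' regime, and then you cannot conclude that its weak cluster points are common fixed points. Your remark about ``using $\alpha_n\to0$ to kill the $\|x^n-\bar u\|^2$ terms'' does not repair this, because the residual terms appear in Lemma~\ref{lemma2}, not in the $\alpha_n$-recursion of Lemma~\ref{lemma4}.

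The paper closes this gap with a two-case (Maing\'e-type) analysis. If $a_n:=\|x^n-\bar u\|^2$ is eventually decreasing, it converges, so $a_n-a_{n+1}\to0$ and Lemma~\ref{lemma2} gives the residual decay along the whole sequence; this is essentially your argument and it is fine in that case. Otherwise one defines $\nu(n):=\max\{k\in[n_0,n]:a_k\le a_{k+1}\}$, notes $a_{\nu(n)}\le a_{\nu(n)+1}$ and $a_n\le a_{\nu(n)+1}$, and applies Lemma~\ref{lemma2} at the indices $\nu(n)$, where the inequality $0\le a_{\nu(n)+1}-a_{\nu(n)}$ immediately yields $\frac{\lambda_{\nu(n)}(2-\lambda_{\nu(n)})}{4m}\sum_i\|S_iy^{\nu(n)}-S_{i-1}y^{\nu(n)}\|^2\le\xi_{\nu(n)}\to0$. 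The DC argument then gives $\limsup_n\delta_{\nu(n)}\le0$, and Lemma~\ref{lemma4} combined with $a_{\nu(n)}\le a_{\nu(n)+1}$ yields $a_{\nu(n)}\le\delta_{\nu(n)}$, hence $a_{\nu(n)}\to0$ and finally $a_n\le a_{\nu(n)+1}\to0$ (note that in this branch one does not even use Lemma~\ref{xu}). You need to add this case distinction, or an equivalent device, for the proof to be complete.
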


\begin{proof}
	Assume that $\{ F({x^n})\} _{n = 1}^\infty$ is bounded and  $\{ {T_i}\} _{i = 1}^m$ satisfies the DC principle. For simplicity, we denote ${a_n}: = \|{x^n} - \bar u{\|^2}$. Due to Lemma \ref{lemma1} and the assumption $\mathop {\lim }\limits_{n \to \infty }\beta_n=0$, we obtain
	$$\mathop {\lim }\limits_{n \to \infty }\xi_n=0.$$
	To prove the strong convergence of the theorem, we consider the following two cases of the sequence  $\{a_n\} _{n = 1}^\infty$ according to its behavior.
	
	\indent\textbf{Case 1.} Suppose that there exists ${n_0}\in\mathbb{N}$ such that ${a_{n + 1}} < {a_n}$ for all $n \ge {n_0}$. It is clear that $\{a_n\} _{n = 1}^\infty$ is convergent. By utilizing Lemma \ref{lemma2} and the assumption $\mathop {\lim }\limits_{n \to \infty }\xi_n=0$, we obtain
	\begin{eqnarray*}
		0 &\le& \limsup_{n \to \infty } \frac{{{\lambda _n}(2 - {\lambda _n})}}{{4m}}\sum\limits_{i = 1}^m {\|{S_i}{y^n} - {S_{i - 1}}{y^n}{\|^2}}\\
		&\le&\limsup_{n \to \infty } \left( {{a_n} - {a_{n + 1}}} + \xi_n\right) = \lim_{n \to \infty } a_n - \lim_{n \to \infty }a_{n+1}+\lim_{n \to \infty }\xi_n= 0.\nonumber
	\end{eqnarray*}
	Thus, we have
	\begin{equation*}\label{18}
	\mathop {\lim }\limits_{n \to \infty } \frac{{{\lambda _n}(2 - {\lambda _n})}}{{4m}}\sum\limits_{i = 1}^m {\|{S_i}{y^n} - {S_{i - 1}}{y^n}{\|^2}}  = 0.
	\end{equation*}
	Recalling that ${\lambda _n} \in [\varepsilon ,2 - \varepsilon ]$ for an arbitrary constant $\varepsilon  \in (0,1),$ we then have
	\[{\lambda _n} \le 2 - \varepsilon  \Rightarrow \varepsilon  \le 2 - {\lambda _n} \Rightarrow {\lambda _n}\varepsilon  \le {\lambda _n}(2 - {\lambda _n}) \Rightarrow {\varepsilon ^2} \le {\lambda _n}(2 - {\lambda _n}),\]
	and hence
	\[\mathop {\lim }\limits_{n \to \infty } \sum\limits_{i = 1}^m {\|{S_i}{y^n} - {S_{i - 1}}{y^n}{\|^2}}  = 0,\]
	which implies that,  for all $i=1,2,...,m,$
	\begin{equation}\label{20}
	\mathop {\lim }\limits_{n \to \infty } \|{S_i}{y^n} - {S_{i - 1}}{y^n}\| = 0.     
	\end{equation}
	On the other hand, since $\{ {y^n}\} _{n = 1}^\infty$ is a bounded sequence, so is the sequence $\{ \langle {y^n} - \bar u, - F(\bar u)\rangle \} _{n = 1}^\infty$. Now, let $\{ {y^{n_k}}\} _{k = 1}^\infty$ be a subsequence of $\{ {y^n}\} _{n = 1}^\infty$ such that
	\[\limsup\limits_{n\rightarrow\infty}\langle {y^n} - \bar u, - F(\bar u)\rangle  = \mathop {\lim }\limits_{k \to \infty } \langle {y^{n_k}} - \bar u, - F(\bar u)\rangle. \]
	Due to the boundedness of the sequence $\{ {y^{n_k}}\} _{k = 1}^\infty$, there exists a weakly cluster point $z\in\Hi$ and a subsequence $\{ {y^{{n_{{k_j}}}}}\} _{j = 1}^\infty$ of $\{ {y^{n_k}}\} _{k = 1}^\infty$ such that ${y^{{n_{{k_j}}}}}\rightharpoonup z\in\Hi$. According to (\ref{20}), let us note that
	\begin{equation*}\label{wc-ii-2}
	\mathop {\lim }\limits_{j \to \infty }\| ({T_1} - Id){y^{{n_{{k_j}}}}}\|=  \mathop {\lim }\limits_{j \to \infty }\| {S_1}{y^{{n_{{k_j}}}}} - {S_0}{y^{{n_{{k_j}}}}}\| = 0.
	\end{equation*}
	Then the DC principle of $T_1$ yields that
	$z\in\fix T_1.$
	Further, we note that the assumption ${y^{{n_{{k_j}}}}}\rightharpoonup z$  and the fact that
	$\mathop {\lim }\limits_{j \to \infty }\| {T_1}{y^{{n_{{k_j}}}}} - {y^{{n_{{k_j}}}}} \| =0$
	lead to ${T_1}{y^{{n_{{k_j}}}}} \rightharpoonup z.$
	Furthermore, we observe that
	\begin{equation*}\label{wc-ii-6}
	\mathop {\lim }\limits_{j \to \infty }\| ({T_2} - Id){T_1}{y^{{n_{{k_j}}}}}\|  = \mathop {\lim }\limits_{j \to \infty } \| {S_2}{y^{{n_{{k_j}}}}} - {S_1}{y^{{n_{{k_j}}}}}\| = 0.
	\end{equation*}
	By invoking the DC principle of $T_2$, we then obtain 
	$z\in\fix T_2.$
	By continuing the same argument used in the above proving lines, we obtain that $z\in\fix T_i$ for all $i=1,2,\ldots,m,$
	that is $z \in\bigcap_{i=1}^m\fix T_i$. 
	As $\bar u$ is the unique solution to Problem \ref{Problem-VIP}, we have
	\begin{eqnarray}\label{limsupvar}\limsup_{n\rightarrow\infty} \langle {y^n} - \bar u, - F(\bar u)\rangle  &=& \lim_{k \to \infty } \langle {y^{n_k}} - \bar u, - F(\bar u)\rangle\nonumber\\
	&=& \lim_{j \to \infty } \langle {y^{{n_{{k_j}}}}} - \bar u, - F(\bar u)\rangle  = \langle z - \bar u, - F(\bar u)\rangle  \le 0.
	\end{eqnarray}
	In view of $\delta_n$, we note that
	\begin{eqnarray}
	{\delta _n} \le \frac{{2\mu K}}{\tau }{\varphi _n} + \frac{{2\mu }}{\tau }\langle {y^n} - \bar u, - F(\bar u)\rangle, \nonumber
	\end{eqnarray}
	where $K:=\mathop {\sup }\limits_{n \ge 2} \langle {y^n} - \bar u,{d^{n - 1}}\rangle   <  + \infty$. Therefore, $\mathop {\lim }\limits_{n \to \infty } {\varphi _n} = 0$ and the inequality (\ref{limsupvar}) lead to 
	\begin{eqnarray}\label{limdelta}
	\limsup\limits_{n\rightarrow\infty}{\delta _n} &\le& \frac{{2\mu K}}{\tau }\mathop {\lim }\limits_{n \to \infty } {\varphi _n} + \frac{{2\mu }}{\tau }\limsup\limits_{n\rightarrow\infty}\langle {y^n} - \bar u, - F(\bar u)\rangle\le 0.\label{21-1}
	\end{eqnarray}
	According to Lemma \ref{lemma4}, we have, for all $n\ge 2$, that
	$$a_{n+1} \le (1 - {\alpha _n})a_n + {\alpha _n}{\delta _n}.$$
	To reach the conclusion of this case, we observe that $\{\alpha_n\}_{n=1}^\infty\subset[0,1]$ and $\sum\limits_{n = 1}^\infty  {{\alpha _n}}=\infty$ which are following the assumptions of $\beta_n$ and the property of $\tau$. Therefore, by applying this and the relation (\ref{limdelta}), Lemma \ref{xu} yields that $\mathop {\lim }\limits_{n \to \infty }\|{x^n}-\bar u\| = 0$.
	
	\textbf{Case 2.} Suppose that for any $n_0$, there exists an integer $k\ge n_0$ such that ${a_{{k}}} \le {a_{{k} + 1}}$. For $n$ large enough, we define a set of indexes by 
	\begin{equation*}\label{jn}
	{J_n}: = \left\{ {k \in [{n_0},n]:{a_k} \le {a_{k + 1}}} \right\}.
	\end{equation*}
	Also, for each $n\ge n_0$, we denote
	\begin{equation*}\label{nun}
	\nu(n):=\max J_n.
	\end{equation*}
	From the above definitions, we observe that $J_n$ is nonempty as there is an $n_0\in J_n$. Due to ${J_n} \subset {J_{n + 1}}$, we get that $\{\nu(n)\}_{n\ge n_0}$ is nondecreasing and $\nu(n)\to\infty$ as $n\to\infty$. Furthermore,  it is clear that,  for all $n\ge n_0$,
	\begin{equation}\label{22}
	a_{\nu(n)}\le a_{\nu(n)+1}.
	\end{equation}
	Now, let us notice from the definition of $J_n$ that, for all $n\ge n_0$, we have $\nu(n)\le n$ which can be considered in the following cases:
	If $\nu(n)= n$, we have $a_{n}=a_{\nu(n)}\le a_{\nu(n)+1}$. 
	If $\nu(n)= n-1$, we have $a_n=a_{\nu(n) +1}$. 
	If $\nu(n)<n-1$, we have ${a_{\nu (n) + 1}} > {a_{\nu (n) + 2}} > \cdots > {a_{n - 1}} > {a_n}$ which is followed by the fact that whenever  we set ${a_{\nu (n) + 1}} \le {a_{\nu (n) + 2}}$, the definition of $S_n$ yields that $\nu (n) + 1\in S_n$. However, we know that $\nu(n)=\max{J_n}$. Thus the assumption ${a_{\nu (n) + 1}} \le {a_{\nu (n) + 2}}$ leads to a contradiction. This similar argument happens to the other terms as well. Therefore, by the aforementioned cases, we obtain, for all $n\ge n_0$, that
	\begin{equation}\label{23}
	a_{n}\le a_{\nu(n)+1}.
	\end{equation}
	Next, utilizing Lemma \ref{lemma2} and the inequality (\ref{22}) lead to 
	\[0 \le {a_{\nu (n) + 1}} - {a_{\nu(n)}} \le  - \frac{{{\lambda _{\nu(n)}}\left(2 - {\lambda _{\nu(n)}}\right)}}{{4m}}\sum\limits_{i = 1}^m {\| {S_i}{y^{\nu(n)}} - {S_{i - 1}}{{y^{\nu(n)}}}{\| ^2}}  + {\xi _{\nu(n)}},\]
	and hence
	\[\frac{{{\lambda _{\nu(n)}}\left(2 - {\lambda _{\nu(n)}}\right)}}{{4m}}\sum\limits_{i = 1}^m {\| {S_i}{y^{\nu(n)}} - {S_{i - 1}}{{y^{\nu(n)}}}{\| ^2}}\leq{\xi _{\nu(n)}},\]
	for all $n\ge n_0$.
	According to the assumption $\mathop{\lim }\limits_{n \to \infty }\xi_{\nu(n)}=0$ and the fact that $\varepsilon ^2\le{\lambda _{\nu(n)}}(2 - {\lambda _{\nu(n)}})$, we obtain
	\begin{equation}\label{24}
	\mathop {\lim }\limits_{n \to \infty }\|{S_i}{y^{\nu (n)}} - {S_{i - 1}}{y^{\nu (n)}}\| = 0.
	\end{equation}
	Now, let $\{ {y^{{\nu(n_k)}}}\} _{k = 1}^\infty  \subset \{ {y^{\nu(n)}}\} _{n = 1}^\infty$ be a subsequence such that
	\[\limsup\limits_{n\rightarrow\infty}\langle {y^{\nu(n)}} - \bar u, - F(\bar u)\rangle  = \mathop {\lim }\limits_{k \to \infty } \langle {y^{{\nu(n_k)}}} - \bar u, - F(\bar u)\rangle.\]
	By proceeding the similar argument to those used in {\bf Case 1}, the relation (\ref{24}) and the DC principle of each $T_i$ yields that, for any subsequence $\{ {y^{{\nu(n_{{k_j}})}}}\} _{j = 1}^\infty$ of  $\{ {y^{{\nu{(n_k)}}}}\} _{k = 1}^\infty$, we get that
	$ {y^{{\nu(n_{{k_j}})}}}\rightharpoonup z\in\bigcap_{i=1}^m\fix T_i$. Furthermore, we have
	\begin{eqnarray*}\limsup\limits_{n\rightarrow\infty}\langle {y^{\nu (n)}} - \bar u, - F(\bar u)\rangle  &=& \mathop {\lim }\limits_{k \to \infty } \langle {y^{\nu ({n_k})}} - \bar u, - F(\bar u)\rangle\\
		&=& \mathop {\lim }\limits_{j \to \infty } \langle {y^{\nu ({n_{{k_j}}})}} - \bar u, - F(\bar u)\rangle  = \langle z - \bar u, - F(\bar u)\rangle  \le 0.
	\end{eqnarray*}
	As a result, we simultaneously obtain
	\begin{equation}\label{25}
	\limsup\limits_{n\rightarrow\infty}\delta_{\nu(n)}\le 0.
	\end{equation}
	In the light of Lemma \ref{lemma4}, we have 
	\[ {a_{\nu (n) + 1}} \le \left( {1 - {\alpha _{\nu (n)}}} \right){a_{\nu (n)}} + {\alpha _{\nu (n)}}{\delta _{\nu (n)}},\]
	and hence
	\begin{eqnarray}
	{a_{\nu (n) + 1}} - {a_{\nu (n)}} &\le& {\alpha _{\nu (n)}}\left( {{\delta _{\nu (n)}} - {a_{\nu (n)}}} \right). \nonumber
	\end{eqnarray}
	Since $\alpha _{\nu (n)}>0$, we obtain 
	\[{a_{\nu (n)}} \le {\delta _{\nu (n)}}.\]
	Thanks to (\ref{25}), we have
	\[\limsup\limits_{n\rightarrow\infty}{a_{\nu (n)}} \le \limsup\limits_{n\rightarrow\infty}{\delta _{\nu (n)}} \le 0,\]
	which leads to 
	\[\mathop {\lim }\limits_{n \to \infty } {a_{\nu (n)}} = 0.\]
	By utilizing the inequality (\ref{23}) together with this, we have
	\[0 \le \limsup\limits_{n\rightarrow\infty}{a_n} \le \limsup\limits_{n\rightarrow\infty}{a_{\nu (n) + 1}} = 0.\]
	Hence, we finally obtain that
	$\mathop {\lim }\limits_{n \to \infty } {a_n}  = 0$ as desired.
	\qed
\end{proof}

\begin{remark} 
	\begin{itemize} 
		\item[(i)]  The step-size sequences  $\{\varphi_n\}_{n=1}^\infty$ and $\{\beta_n\}_{n=1}^\infty$  in Theorem \ref{main-thm}  are, for instance,  $\varphi_n=\frac{1}{(n+1)^a}$ with $a>0$ and $\beta_n=\frac{1}{(n+1)^b}$ with $0<b\leq1$ for all $n\in\N$.
		\item[(ii)] It can be noted that the DC principle  assumed in Theorem \ref{main-thm} will be satisfying in many cases, for instance,  the operators $T_i, i=1,\ldots,m$, are nonexpansive, or, in particular,  the metric projections onto closed convex sets. Moreover, this still holds true when the operators $T_i, i=1,\ldots,m$, are subgradient projections of  continuous convex functions which are Lipschitz continuous on bounded subsets which is further discussed in the next section. 
	\end{itemize}
\end{remark}

\section{Variational Inequality Problem with Functional Constraints}

In this section, we will consider the solving of the variational inequality problem over the finite family of continuous convex functional constraints and a simple closed convex and bounded constraint by applying the results obtained in the previous section.

Let $C_i:=\{x\in\Hi:c_i(x)\leq0\}$ be a sublevel set of a continuous and convex function $c_i:\Hi\to\R$, $i=1,\ldots,m-1$, and $C_m\subset\Hi$ be a simple closed convex and bounded set. Let $F:\Hi\to\Hi$ be $\eta$-strongly monotone and $\kappa$-Lipschitz continuous, we consider the variational inequality of finding a point $\bar u\in \bigcap\limits_{i = 1}^m C_i$ such that 
\begin{eqnarray}\label{vip-2}\langle F({\bar u}),z- {\bar u}\rangle  \ge 0 \indent\text{    for all } z\in \bigcap\limits_{i = 1}^m C_i.  
\end{eqnarray}

Assume that $\bigcap\limits_{i = 1}^m C_i\neq\emptyset$. Let us consider, for each $i=1,\ldots,m-1$, since each $C_i$  is the sublevel set of the function $c_i$, we define the operator $T_i:\Hi\to\Hi$ to be a subgradient projection relative to $c_i$, $P_{c_i}:\Hi\to\Hi$, namely, for  every $x\in\Hi$, 
\begin{equation*}
P_{c_i}(x):=\left \{
\begin{aligned}
&x-\frac{c_i(x)}{\|g_i(x)\|^2}g_i(x) \indent \textrm{if } c_i(x)>0,\\
&x\hspace{3.05cm} \textrm{ otherwise,}
\end{aligned} \right.
\end{equation*}  
where $g_i(x)\in\partial c_i(x):=\{g\in\Hi:\<g,y-x\>\leq c_i(y)-c_i(x),\forall y\in\Hi\}$, is a subgradient of the function $c_i$ at the point $x$. Since $c_i, i=1,\ldots,m-1$, are continuous and convex, we ensure that the subdifferential sets $\partial c_i(x), i=1,\ldots,m-1$, are nonempty, for every $x\in\Hi$, see \cite[Proposition 16.17]{BC17}. Note that the subgradient projection $P_{c_i}$ is a cutter and $\fix P_{c_i}=C_i$, for all $i=1,\ldots,m-1$, see \cite[Lemma 4.2.5 and Corollary 2.4.6]{C12} 

Moreover, since $C_m$ is the nonempty closed convex and bounded, we define the operator $T_m:\Hi\to\Hi$ to be a metric projection onto $C_{m}$ written by $\mathrm{proj}_{C_m}:\Hi\to\Hi$, i.e., for every $x\in\Hi$, we have $$\|x-\mathrm{proj}_{C_m}x\|= \inf_{y\in C_m} \|x-y\|.$$
Note that the metric projection $\mathrm{proj}_{C_m}$ is also a cutter and $\fix \mathrm{proj}_{C_m}=C_m$, see \cite[Theorem 2.2.21]{C12}. 
These mean that the operators $T_i, i=1,\ldots,m$, are cutters and $\bigcap\limits_{i = 1}^m \fix T_i\neq\emptyset$.

Now, in order to construct an iterative method for solving the problem (\ref{vip-2}), we recall the notations $T := {T_m}{T_{m - 1}}\dots {T_1},$
$S_0:=Id$, and $S_i:=T_iT_{i-1}\dots T_1, i=1,2,\dots,m$. Furthermore,  for every $x\in\Hi$, we denote $u_i:=S_ix$, and $v_i:=u_i-u_{i-1}$. Thus, we have $u_0=x$ and $u_i=T_iu_{i-1}, i=1,2,\dots,m$. Firstly, let us note from \cite[Remark 10]{CC12} that
$$\sum_{i=1}^m\<v_i+v_{i+1}+\cdots+v_m,v_i\>=\sum_{i=1}^m\<v_1+v_{2}+\cdots+v_i,v_i\>.$$
It follows that, for every $x\in\Hi$, 
\begin{eqnarray*}
	\sum_{i=1}^m\<Tx-S_{i-1}x,S_ix-S_{i-1}x\>
	&=&\sum_{i=1}^m\<u_m-u_{i-1},u_i-u_{i-1}\>\\
	&=&\sum_{i=1}^m\<v_i+v_{i+1}+\cdots+v_m,v_i\>\\
	&=&\sum_{i=1}^m\<v_1+v_{2}+\cdots+v_i,v_i\>\\
	&=&\sum_{i=1}^m\<u_i-u_0,u_i-u_{i-1}\>\\
	&=&\sum_{i=1}^m\<T_iu_{i-1}-x,T_iu_{i-1}-u_{i-1}\>
\end{eqnarray*}

Now, for every $i=1,\ldots,m-1$, and $x\notin\bigcap\limits_{i = 1}^m \fix T_i$, we note that
\begin{equation*}
u_i=T_iu_{i-1}=u_{i-1}-\frac{\max\{c_i(u_{i-1}),0\}}{\|g_i(u_{i-1})\|^2}g_i(u_{i-1}),
\end{equation*}  
where $g_i(u_{i-1})$ is a subgradient of the function $c_i$ at the point $u_{i-1}$. For simplicity, we  use throughout  the convention that  $\frac{\max\{c_i(u_{i-1}),0\}}{\|g_i(u_{i-1})\|}=0$ whenever $\max\{c_i(u_{i-1}),0\}=0$. Subsequently, we have
{\small\begin{eqnarray*}
		\<T_iu_{i-1}-x,T_iu_{i-1}-u_{i-1}\>&=&\<T_iu_{i-1}-x,u_{i-1}-\frac{\max\{c_i(u_{i-1}),0\}}{\|g_i(u_{i-1})\|^2}g_i(u_{i-1})-u_{i-1}\>\\
		&=&-\frac{\max\{c_i(u_{i-1}),0\}}{\|g_i(u_{i-1})\|^2}\<T_iu_{i-1}-x,g_i(u_{i-1})\>\\
		&=&-\frac{\max\{c_i(u_{i-1}),0\}}{\|g_i(u_{i-1})\|^2}\<u_{i-1}-\frac{\max\{c_i(u_{i-1}),0\}}{\|g_i(u_{i-1})\|^2}g_i(u_{i-1})-x,g_i(u_{i-1})\>\\
		&=&-\frac{\max\{c_i(u_{i-1}),0\}}{\|g_i(u_{i-1})\|^2}\<u_{i-1}-x,g_i(u_{i-1})\>\\
		&&+\left(\frac{\max\{c_i(u_{i-1}),0\}}{\|g_i(u_{i-1})\|^2}\right)^2\<g_i(u_{i-1}),g_i(u_{i-1})\>\\
		&=&-\frac{\max\{c_i(u_{i-1}),0\}}{\|g_i(u_{i-1})\|^2}\<u_{i-1}-x,g_i(u_{i-1})\>+\left(\frac{\max\{c_i(u_{i-1}),0\}}{\|g_i(u_{i-1})\|}\right)^2
	\end{eqnarray*}
}
On the other hand, for every $x\notin\fix T_m$, we note that
$$\<\mathrm{proj}_{C_m}u_{m-1}-x,\mathrm{proj}_{C_m}u_{m-1}-u_{m-1}\>.$$
Therefore, the step-size function $\sigma:\Hi\to[0,+\infty)$ which is defined in (\ref{sigma}) can be written as
\begin{equation*}
\sigma (x):=\left\{ 
\begin{array}{ll}
\<\mathrm{proj}_{C_m}u_{m-1}-x,\mathrm{proj}_{C_m}u_{m-1}-u_{m-1}\>\\
-\sum_{i=1}^{m-1}\frac{\max\{c_i(u_{i-1}),0\}}{\|g_i(u_{i-1})\|^2}\<u_{i-1}-x,g_i(u_{i-1})\>\\
+\sum_{i=1}^{m-1}\left(\frac{\max\{c_i(u_{i-1}),0\}}{\|g_i(u_{i-1})\|}\right)^2, & \text{for \ }x\notin C, \\ 
1, & \text{otherwise.}%
\end{array}%
\right. 
\end{equation*}%
According to the above convention and Lemma \ref{lemma-CC12}(i), we can ensure that the step-size function $\sigma (x)$ is well-defined and nonnegative which is bounded from below by $\frac{1}{2m}$, for every $x\in\Hi$.

Now, we are in position to propose the method for solving the problem (\ref{vip-2}) as the following algorithm.\\

\begin{algorithm}[H]
	\SetAlgoLined
	\vskip2mm
	\textbf{Initialization}: Given $\mu  \in (0,2\eta /{\kappa ^2})$, $\{ \beta _{n}\}_{n = 1}^\infty\subset(0,1]$, $\{ \varphi _{n}\}_{n = 1}^\infty\subset[0,\infty)$  and a positive sequence $\{\lambda_n\}_{n=1}^\infty$. Choose $x^1\in C_m$ arbitrarily and set ${d^1} =  - F({x^1})$. \\
	
	\textbf{Iterative Steps}: For a given current iterate $x^n\in C_m$ ($n\in\mathbb{N}$), calculate as follows:
	
	\textbf{Step 1}. Compute $y^n$ as
	$$y^n:=x^n + \mu\beta _{n}d^n.$$
	
	\textbf{Step 2}. Set $u^n_0:=y^n$ and compute the estimates
	$$u^n_{i}=u^n_{i-1}-\frac{\max\{c_i(u^n_{i-1}),0\}}{\|g_i(u^n_{i-1})\|^2}g_i(u^n_{i-1}),\hspace{0.5cm}\indent i=1,\ldots,m-1,$$
	where $g_i(u^n_{i-1})$ is a subgradient of the function $c_i$ at the point $u^n_{i-1}$, and subsequently compute
	$$u^n_{m}:=\mathrm{proj}_{C_m}u^n_{m-1}.$$
	
	\textbf{Step 3}.  Compute a step size as
	\begin{equation*}
	\sigma (y^n):=\left\{ 
	\begin{array}{ll}
	\<\mathrm{proj}_{C_m}u^n_{m-1}-y^n,\mathrm{proj}_{C_m}u^n_{m-1}-u^n_{m-1}\>\\
	-\sum_{i=1}^{m-1}\frac{\max\{c_i(u^n_{i-1}),0\}}{\|g_i(u^n_{i-1})\|^2}\<u^n_{i-1}-x,g_i(u^n_{i-1})\>\\
	+\sum_{i=1}^{m-1}\left(\frac{\max\{c_i(u^n_{i-1}),0\}}{\|g_i(u^n_{i-1})\|}\right)^2, & \text{for \ }y^n\notin C, \\ 
	1, & \text{otherwise.}%
	\end{array}%
	\right. 
	\end{equation*}%
	
	\textbf{Step 4}. Compute a next iterate and a search direction as
	\begin{equation*}
	\left \{
	\begin{aligned}
	&{x^{n + 1}} := {\mathrm{proj}_{C_m}}({y ^n} + {\lambda _n}\sigma ({y ^n})(u^n_{m} - y^n)), \\
	&{d^{n + 1}}: =  - F({x^{n + 1}}) + {\varphi _{n + 1}}{d^n}.\\
	\end{aligned} \right.
	\end{equation*}

	Update $n:=n+1$ and return to \textbf{Step 1}.
	\caption{ESCoM-CGD for VIP with functional constraints}
	\label{algorithm-2}
	\vskip2mm
\end{algorithm}


\begin{remark} Observe that Algorithm \ref{algorithm-2} is nothing else than a particular case of ESCoM-CGD (Algorithm \ref{algorithm}). Moreover, as we have mentioned in Remark \ref{rem8} (iii), we underline here again that the initial point $x^1$ in Algorithm \ref{algorithm-2} is particularly chosen in the nonempty closed convex and bounded subset $C_m$ rather than in the whole space $\Hi$, and the generated iterates $x^n,n\geq2$, are projected into the subset $C_m$. These are done in order to ensure  the boundedness of the generated sequence $\{ {x^n}\} _{n = 1}^\infty$.
\end{remark}

The following corollary is a consequence of Theorem \ref{main-thm}.

\begin{corollary}\label{cor} Let the sequence $\{ {x^n}\} _{n = 1}^\infty$ be given by Algorithm \ref{algorithm-2}. Suppose that $\mathop {\lim }\limits_{n \to \infty }{\beta _n} = 0$, $\sum\limits_{n = 1}^\infty  {{\beta _n}}= \infty$, $\mathop {\lim }\limits_{n \to \infty } {\varphi _n} = 0$, and $\{ \lambda _{n}\}_{n = 1}^\infty \subset [\varepsilon ,2 - \varepsilon]$ for some constant $\varepsilon \in (0,1)$. If one of the following conditions hold:
	\begin{itemize}
		\item[(i)] The functions $c_i, i=1,\ldots,m-1$, are Lipschitz continuous relative to every bounded subset of $\Hi$;
		\item[(ii)] The functions $c_i, i=1,\ldots,m-1$, are bounded on every bounded subset of $\Hi$;
		\item[(iii)] The subdifferentials $\partial c_i, i=1,\ldots,m-1$, map every bounded subset of $\Hi$ to a bounded set,
	\end{itemize}
	then the sequence $\{ {x^n}\} _{n = 1}^\infty$ converges strongly to $\bar u$, the unique solution of the problem (\ref{vip-2}).
\end{corollary}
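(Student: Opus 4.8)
The plan is to recognize Algorithm~\ref{algorithm-2} as the instance of Algorithm~\ref{algorithm} obtained by taking $T_i:=P_{c_i}$ for $i=1,\dots,m-1$ and $T_m:=\mathrm{proj}_{C_m}$, and then to verify that all hypotheses of Theorem~\ref{main-thm} hold, so that its conclusion transfers verbatim. Three things must be checked: that the $T_i$ are cutters with $\bigcap_{i=1}^m\fix T_i\neq\emptyset$; that $\{F(x^n)\}_{n=1}^\infty$ is bounded; and that each $T_i$ satisfies the DC principle. The first has already been recorded in the discussion preceding Algorithm~\ref{algorithm-2}: the subgradient projections $P_{c_i}$ onto the sublevel sets $C_i$ and the metric projection $\mathrm{proj}_{C_m}$ onto $C_m$ are cutters with $\fix P_{c_i}=C_i$ and $\fix \mathrm{proj}_{C_m}=C_m$, while $\bigcap_{i=1}^m C_i\neq\emptyset$ is assumed. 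So the real work lies in the remaining two points.

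For boundedness of $\{F(x^n)\}$, I would argue by induction that $x^n\in C_m$ for every $n$: $x^1\in C_m$ by the initialization, and since the update gives $x^{n+1}=\mathrm{proj}_{C_m}(\cdot)\in C_m$, the claim propagates. As $C_m$ is bounded, $\{x^n\}$ is bounded; and since $F$ is $\kappa$-Lipschitz continuous it carries the bounded set $C_m$ to a bounded set, so $\{F(x^n)\}\subset F(C_m)$ is bounded. This is exactly the mechanism pointed out in Remark~\ref{rem8}(iii): it is the presence of $T_m=\mathrm{proj}_{C_m}$ together with the choice $x^1\in C_m$ that secures the bound, with no feasibility of the iterates required.

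For the DC principle: $T_m=\mathrm{proj}_{C_m}$ is nonexpansive with $\fix T_m=C_m\neq\emptyset$, hence satisfies the DC principle by \cite[Lemma 2]{Z71}. For $i=1,\dots,m-1$, I would first note that each of the conditions (i), (ii), (iii) forces $\partial c_i$ to map bounded subsets of $\Hi$ to bounded sets: (iii) says this directly; (i) gives $\|g\|\le L_i$ for $g\in\partial c_i(x)$ with $x$ in a bounded set, where $L_i$ is the relevant local Lipschitz constant; and (ii) gives it through the subgradient inequality $\langle g,y-x\rangle\le c_i(y)-c_i(x)$ tested over $\|y-x\|\le1$. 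Granting this, let $x^n\rightharpoonup\bar x$ with $\|(P_{c_i}-Id)x^n\|\to0$; the sequence $\{x^n\}$ is bounded, so $\{g_i(x^n)\}$ is bounded, say by $M_i$, and from the defining formula together with the stated convention one has $\max\{c_i(x^n),0\}=\|g_i(x^n)\|\,\|(P_{c_i}-Id)x^n\|\le M_i\,\|(P_{c_i}-Id)x^n\|\to0$, hence $\limsup_{n\to\infty}c_i(x^n)\le0$. Since $c_i$ is convex and continuous it is weakly lower semicontinuous, so $c_i(\bar x)\le\liminf_{n\to\infty}c_i(x^n)\le0$, i.e. $\bar x\in C_i=\fix P_{c_i}$, which gives $(P_{c_i}-Id)\bar x=0$; this is the DC principle for $P_{c_i}$. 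With the three hypotheses of Theorem~\ref{main-thm} verified, that theorem yields strong convergence of $\{x^n\}$ to the unique solution $\bar u$ of (\ref{vip-2}). The induction $x^n\in C_m$ and the final appeal to Theorem~\ref{main-thm} are routine; the only point needing care is the DC verification for the subgradient projections, and inside it the observation that (i), (ii), and (iii) each imply boundedness of the subdifferentials on bounded sets, after which the weak lower semicontinuity argument is short.
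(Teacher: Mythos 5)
Your proposal is correct and follows the same overall route as the paper: recognize Algorithm \ref{algorithm-2} as an instance of Algorithm \ref{algorithm} and verify the two non-automatic hypotheses of Theorem \ref{main-thm}, namely the boundedness of $\{F(x^n)\}_{n=1}^\infty$ (via $x^n\in C_m$ with $C_m$ bounded and $F$ Lipschitz) and the DC principle for the constraint operators. The one place you diverge is the DC verification for the subgradient projections $P_{c_i}$: the paper disposes of this by citing \cite[Theorem 4.2.7]{C12} (condition (i) implies the DC principle for $P_{c_i}$) together with \cite[Proposition 16.20]{BC17} (equivalence of (i)--(iii) for continuous convex functions), whereas you reprove it from scratch — showing that each of (i)--(iii) forces $\partial c_i$ to be bounded on bounded sets, deducing $\max\{c_i(x^n),0\}=\|g_i(x^n)\|\,\|(P_{c_i}-\Id)x^n\|\to 0$, and invoking weak lower semicontinuity of the convex continuous $c_i$ to place the weak limit in $C_i=\fix P_{c_i}$. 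Your direct argument is sound and makes the corollary self-contained at the cost of a few extra lines; the paper's citation-based version is shorter but leans on the external equivalence of (i)--(iii) rather than exhibiting, as you do, that each condition independently yields the bound on the subgradients that the argument actually uses.
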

\begin{proof} Observe that the  convergence Theorem \ref{main-thm} is depended on the assumptions that the sequence $\{ F({x^n})\} _{n = 1}^\infty$ is bounded and  the operators $T_i, i=1,\ldots,m$, satisfy the DC principle. If we verify that these two mentioned assumptions are true, the convergence is a consequence of Theorem \ref{main-thm}. 
	
	Now, since the operator $F$ is Lipschitz continuous and the generated sequence $\{ {x^n}\} _{n = 1}^\infty$ is bounded, it follows that the sequence $\{ F({x^n})\} _{n = 1}^\infty$ is also bounded. On the other hand, it is noted from \cite[Theorem 4.2.7]{C12} that for a continuous convex function which is satisfying (i), we have that its corresponding subgradient projection will be satisfying the DC principle. Consequently, this means that  the operators $T_i, i=1,\ldots,m-1$,  are satisfying the DC principle. Moreover, we know from \cite[Proposition 16.20]{BC17} that for a continuous convex function, the assumptions (i) - (iii) are equivalent. This gives us that these three assumptions are the sufficient conditions for the fact that operators $T_i, i=1,\ldots,m-1$,  are satisfying the DC principle. Furthermore, since the metric projection $\mathrm{proj}_{C_m}$ is a nonexpansive operator (see, \cite[Theorem 2.2.21]{C12}), it follows that $T_m$ is also satisfying the DC principle. Hence, the assumptions of Theorem \ref{main-thm} are satisfied, and we therefore conclude that the sequence $\{ {x^n}\} _{n = 1}^\infty$ converges strongly to the unique solution of the problem (\ref{vip-2}) as desired.
	\qed
\end{proof}
\begin{remark}\begin{itemize}
		\item[(i)] It is very important to note that the continuity of $c_i, i=1,\ldots,m$, and the assumptions (i) - (iii) used in Corollary \ref{cor} can be dropped whenever the whole Hilbert space $\Hi$ is finite dimensional, see \cite[Corollary 8.40 and Proposition 16.20]{BC17} for further details. 
		\item[(ii)] An example of the simple closed convex and bounded  set $C_m$ in a general Hilbert space is nothing else than a closed ball $C_m:=\{x\in\Hi:\|x-z\|\leq r\}$, where $z\in\Hi$ is the center, and $r>0$ is the radius. In particular, if $\Hi=\R^n$, the finite-dimensional Euclidean space, an additional example is a box constraint $C_m:=[a_1,b_1]\times[a_2,b_2]\times\cdots\times[a_n,b_n]$, where $a_i,b_i\in\R$ with $a_i\leq b_i, i=1,\ldots,n$. For the closed-form formulae of these simple sets, the reader may consult \cite[Subsections 4.1.6 and 4.1.7]{C12}.
	\end{itemize}
\end{remark}

\section{Numerical Result}

In this section we report the convergence of ESCoM-CGD by the minimum-norm problem to a system of homogeneous linear inequalities with box constraint.  Suppose that we are given a  matrix $\mathbf{A}=[\mathbf{a}_1|\cdots|\mathbf{a}_m]^\top\in \R^{m\times k}$  of predictors  $\mathbf{a}_i=(a_{1i},\ldots,a_{ki})\in\R^k$,  for all $i=1\ldots,m$.  The approach of the considered problem with a box constraint is to find the vector $x\in\R^k$ that solves the problem 

\begin{eqnarray*}
	\begin{array}{ll}
		\textrm{minimize }\indent \frac{1}{2}\|x\|^2\\
		\textrm{subject to}\indent \mathbf{A}x\leq_{\R^m}\mathbf{0}_{\R^m},\\ 
		\indent\indent\indent\indent x\in [u,v]^k,
	\end{array}%
\end{eqnarray*}
or equivalently, in the explicit form,
\begin{eqnarray*}
	\begin{array}{ll}
		\textrm{minimize }\indent \frac{1}{2}\|x\|^2\\
		\textrm{subject to}\indent \<\mathbf{a}_i, x\>\leq 0, i=1\ldots,m, x\in [u,v]^k,
	\end{array}%
\end{eqnarray*}
where $u,v\in\R$ with  $u\leq v$.

Of course, this  minimum-norm problem can be written in the form of Problem \ref{Problem-VIP} as: finding $x^*\in \bigcap\limits_{i = 1}^{m+1} \fix{(\mathrm{proj}_{C_i})}$ such that 
\[\langle {x^*},x - {x^*}\rangle  \ge 0 \indent\text{    for all $x\in \bigcap\limits_{i = 1}^{m+1} \fix{(\mathrm{proj}_{C_i})}$.   }\]
where the constrained sets $C_i:=\{x\in\R^k:\<\mathbf{a}_i,x\>\leq 0\}, i=1,\ldots,m,$ are half-spaces  and  $C_{m+1}:=\{x\in\R^k:x\in [u,v]^k\}$ is a box constraint. It is clear that this variational inequality problem satisfies all assumptions of Problem \ref{Problem-VIP} by setting $F:=Id$, the identity operator, which is $1$-strongly monotone and $1$-Lipschitz continuous, and $T_i:=\mathrm{proj}_{C_i}, i=1,\ldots,m+1,$ the metric projections onto $C_i$ which are cutters with $\fix{(T_i)}=C_i$ and satisfying the demi-closed principle. Moreover, since the box constrained set $C_{m+1}$ is bounded, we have that the generated sequence $\{ {x^n}\} _{n = 1}^\infty$ is a bounded sequence which subsequently yields the boundedness of $\{ {F(x^n)}\} _{n = 1}^\infty$. This means that the assumptions of Theorem \ref{main-thm} are satisfying. All the experiments were performed under MATLAB 9.6 (R2019a) running on a MacBook Air 13-inch, Early 2015 with a 1.6GHz Intel Core i5 processor and 4GB 1600MHz DDR3 memory. All CPU times are given in seconds.

We generate the  matrix $\mathbf{A}$ in $\R^{m\times k}$ where $m=1000$ and $k=200$ by uniformly distributed random generating between $(-5,5)$ and choose the box constraint with boundaries $u=-1$ and $v=1$. The initial point is a vector whose all coordinates are normally distributed randomly chosen in $(0,1)$.  
In order to justify the advantages of the proposed Algoritgm \ref{algorithm}, we thus choose the hybrid conjugate gradient method (HCGM) \cite{IY09} and hybrid three term conjugate gradient method (HTCGM) \cite[Algorithm 6]{I11} as the benchmarks for the numerical comparisons.
In this situation, we set the operator considered in \cite{IY09,I11} by $T:=\mathrm{proj}_{C_{m+1}}\mathrm{proj}_{C_{m}}\cdots \mathrm{proj}_{C_1}$, which is a nonexpansive operator. Since the minimum-norm solution has the unique solution, in all following numerical experiments, we terminate the experimented methods when the norm become small, i.e., $\|x^n\|\leq 10^{-6}$.  We use 10 samplings for different randomly chosen matrix $\mathbf{A}$ and the initial point when performing each combination, and the presented results are averaged. We manually select the involved parameters of each compared algorithm and show some results when it achieves the fairly best performance.

Firstly, we demonstrate the effectiveness of step-size sequence $\varphi_n:=\frac{1}{(n+1)^a}$; where $a>0$, when ESCoM-CGD, HCGM, and HTCGM are applied for solving the above minimum-norm problem. We choose different values $a=0.005, 0.01, 0.05$, and $0.1$, and fix the corresponding paramter $\mu=1$, the step-size sequence $\beta_n=\frac{1}{(n+1)^{0.5}}$, and additionally set $\lambda_n=0.7$ for ESCoM-CGD. We plot the number of iterations and computational time in seconds with respect to different choices of $a$ in Figure \ref{varphi}.
\begin{figure}[h]
	\begin{center}
		\subfigure{
			\resizebox*{5.5cm}{!}{\includegraphics{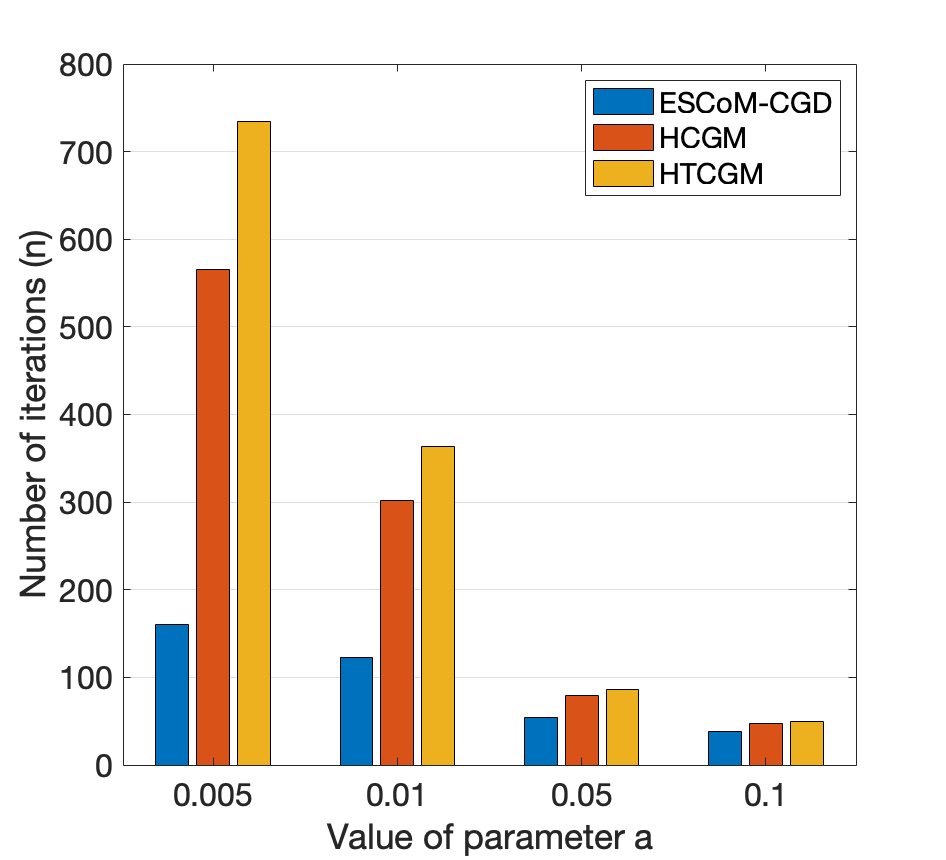}}}
		\subfigure{
			\resizebox*{5.5cm}{!}{\includegraphics{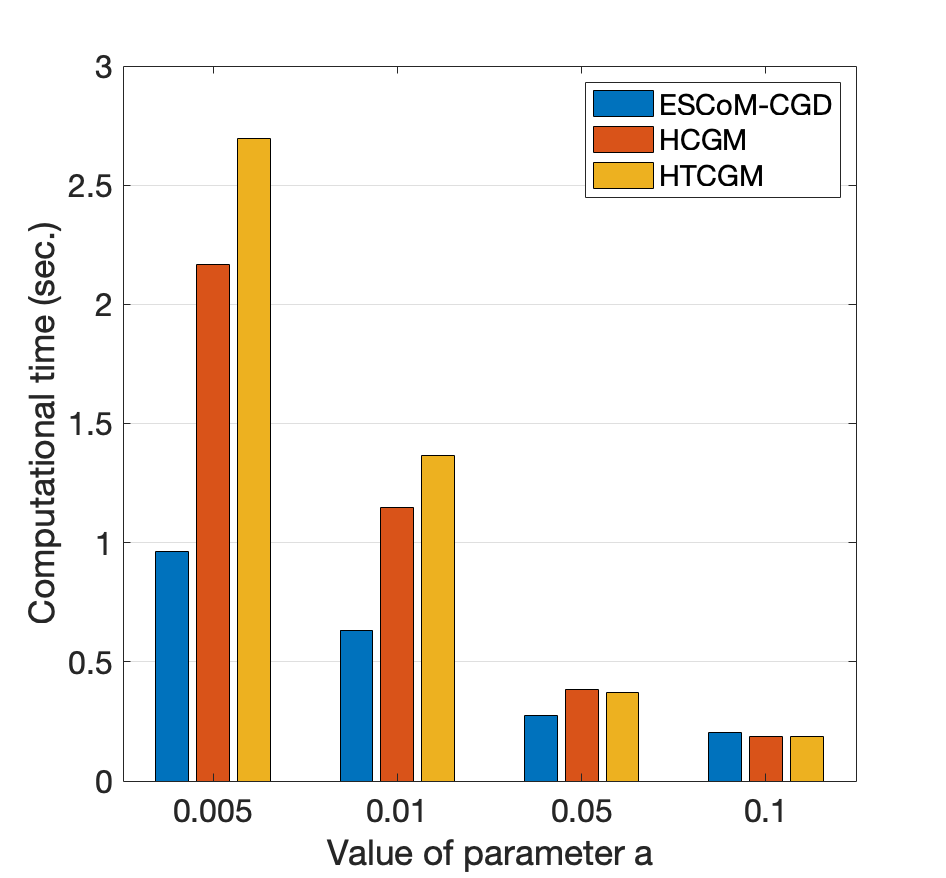}}}
		\caption{Influences of the step sizes $\varphi_n=1/(n+1)^{a}$ for several paramters  $a>0$ when performing ESCoM-CGD, HCGM \cite{IY09} and HTCGM \cite{I11}.}
		\label{varphi}
	\end{center}
\end{figure}

According to the plots in Figure \ref{varphi}, we see that the larger the value of $a$ yields the faster convergence in the senses of it need the smaller number of iterations and less computational time. We also see  that the proposed ESCoM-CGD is really faster than other methods, where the best result is observed for $a=0.1$. Notice that HCGM and HTCGM are very sensitive to the value of $a$, while ESCoM-CGD seems not. In fact, for $a=0.005$, HCGM and HTCGM require more than 550 ierations, whereas for $a=0.1$, it require approximately 50 iterations.

Next, we testify the influence of step-size sequence $\beta_n:=\frac{1}{(n+1)^b}$; where $0<b\leq1$, for the tested methods. We fix $\mu=1$, $\lambda_n=0.7$, and $\varphi_n=\frac{1}{(n+1)^{0.1}}$. We choose different value of $b$ in the interval $(0,1]$, namely, $b=0.01, 0.05, 0.1$, and $0.5$. The number of iterations and computational time in seconds for each choice of $b$ are plotted in Figure \ref{beta}.
\begin{figure}[h]
	\begin{center}
		\subfigure{
			\resizebox*{5.5cm}{!}{\includegraphics{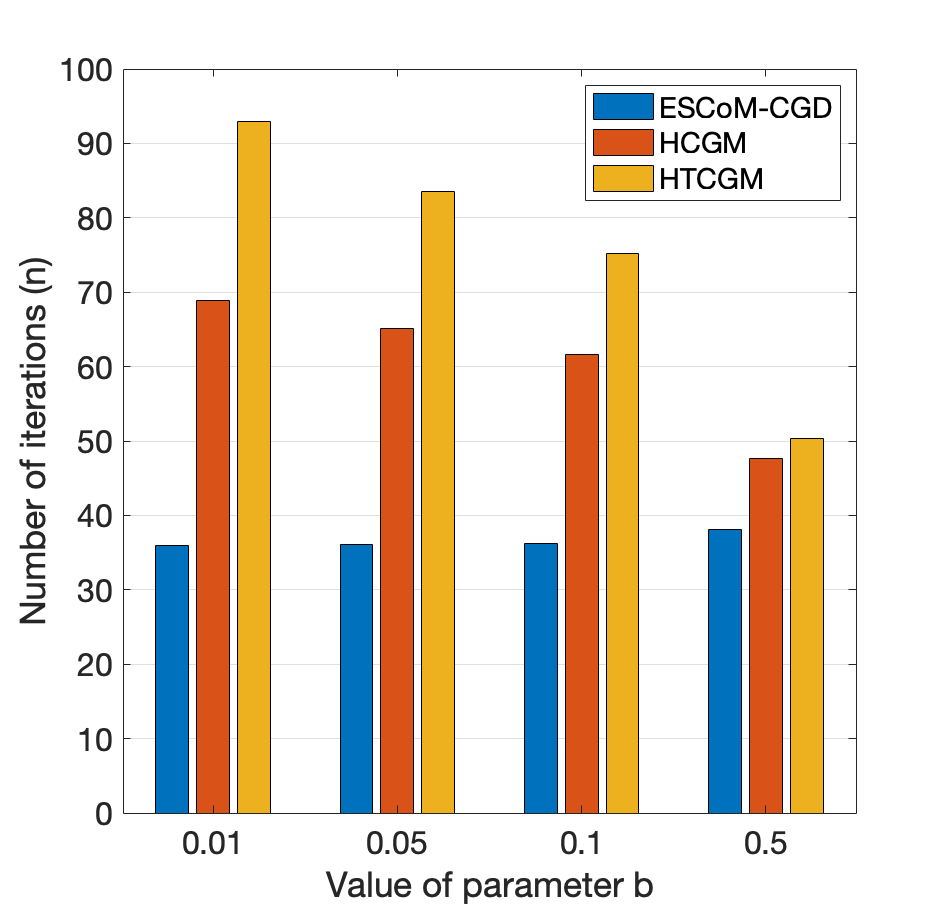}}}
		\subfigure{
			\resizebox*{5.5cm}{!}{\includegraphics{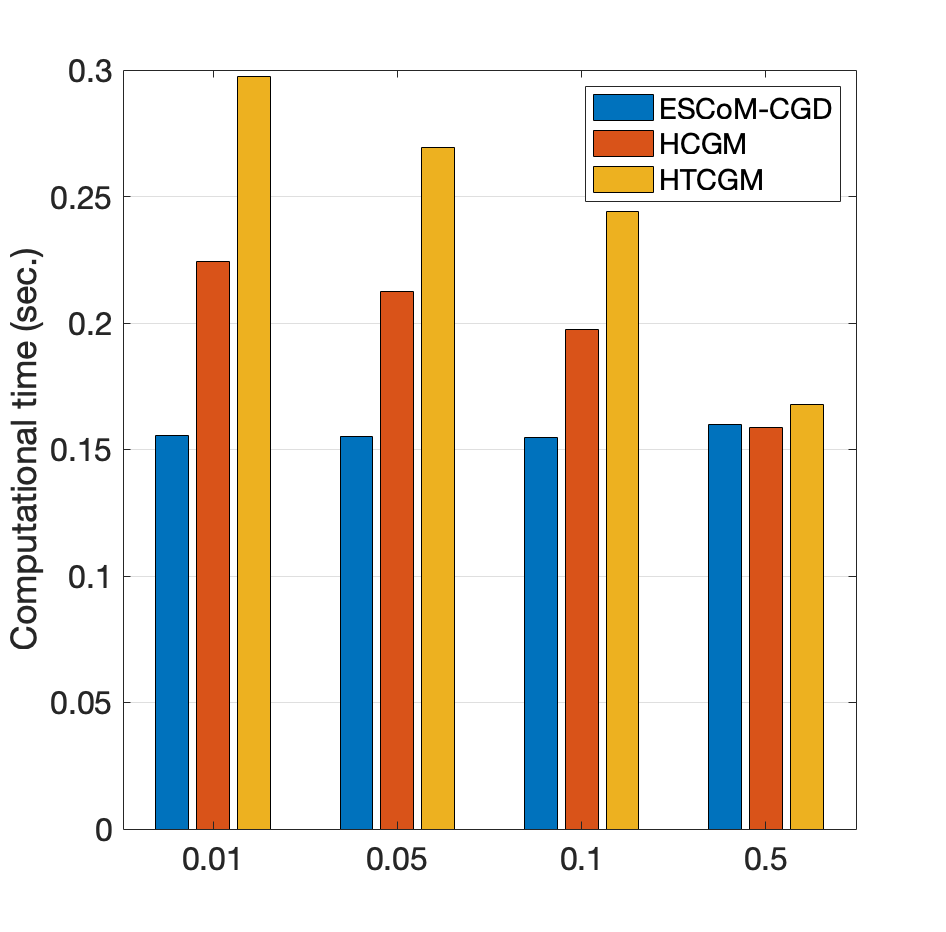}}}
		\caption{Influences of the step sizes $\beta_n=1/(n+1)^{b}$ for several paramters  $0<b\leq1$ when performing ESCoM-CGD, HCGM \cite{IY09} and HTCGM \cite{I11}.}
		\label{beta}
	\end{center}
\end{figure}

It can be seen from Figure \ref{beta} that ESCoM-CGD gives the best results for all values $b$. Moreover, their number of iterations and computational time seem indifferent to the different choices of $b$. For HCGM and HTCGM, we observe the the number of iterations as well as computational time decrease when the values $b$ grow up. For the exact results, the value $b=0.01$ is the best choice for ESCoM-CGD, however, the value $b=0.5$ is the best choice for both HCGM and HTCGM, which is coherent with the assertions in \cite{IY09,I11}.

In Figure \ref{mu}, we illustrate behaviour of the methods with respect to parameter $\mu\in(0,2)$. We fix $\varphi_n=\frac{1}{(n+1)^{0.1}}$ and $\lambda_n=0.7$. Moreover, we fix the best choices $\beta_n=\frac{1}{(n+1)^{0.01}}$ for ESCoM-CGD and $\beta_n=\frac{1}{(n+1)^{0.5}}$ for both HCGM and HTCGM. We choose different value $\mu=10^{-4}, 10^{-3}, 0.01, 0.1, 0.5, 1.0, 1.5$, and $1.9$. According to the plots, we observe that the very small value of $\mu=10^{-4}$ yields the best results for all methods. As a matter of fact, even if the best result for ESCoM-CGD is obtained for very small value $\mu$, we see that the method with large value $\mu>1$ also perform well. The overall best result is observed for HTCGM, this means that the assertion in \cite{I11} is confirmed again.

\begin{figure}[h]
	\begin{center}
		\subfigure{
			\resizebox*{5.5cm}{!}{\includegraphics{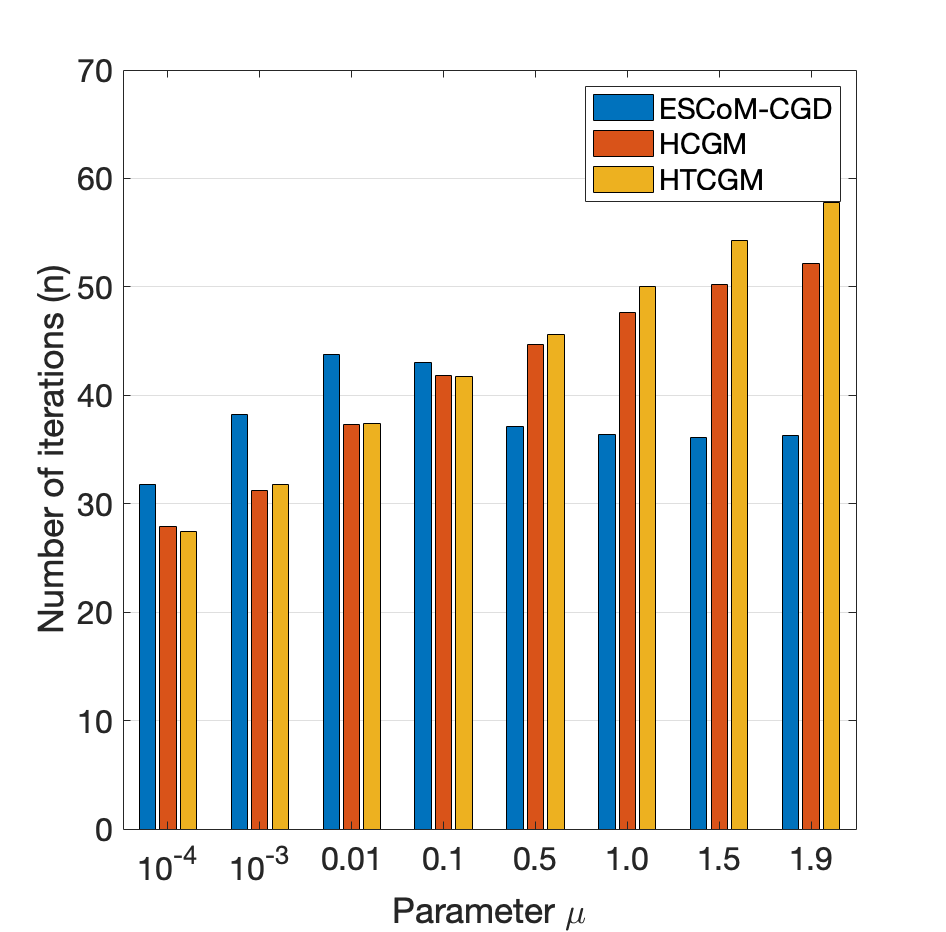}}}
		\subfigure{
			\resizebox*{5.5cm}{!}{\includegraphics{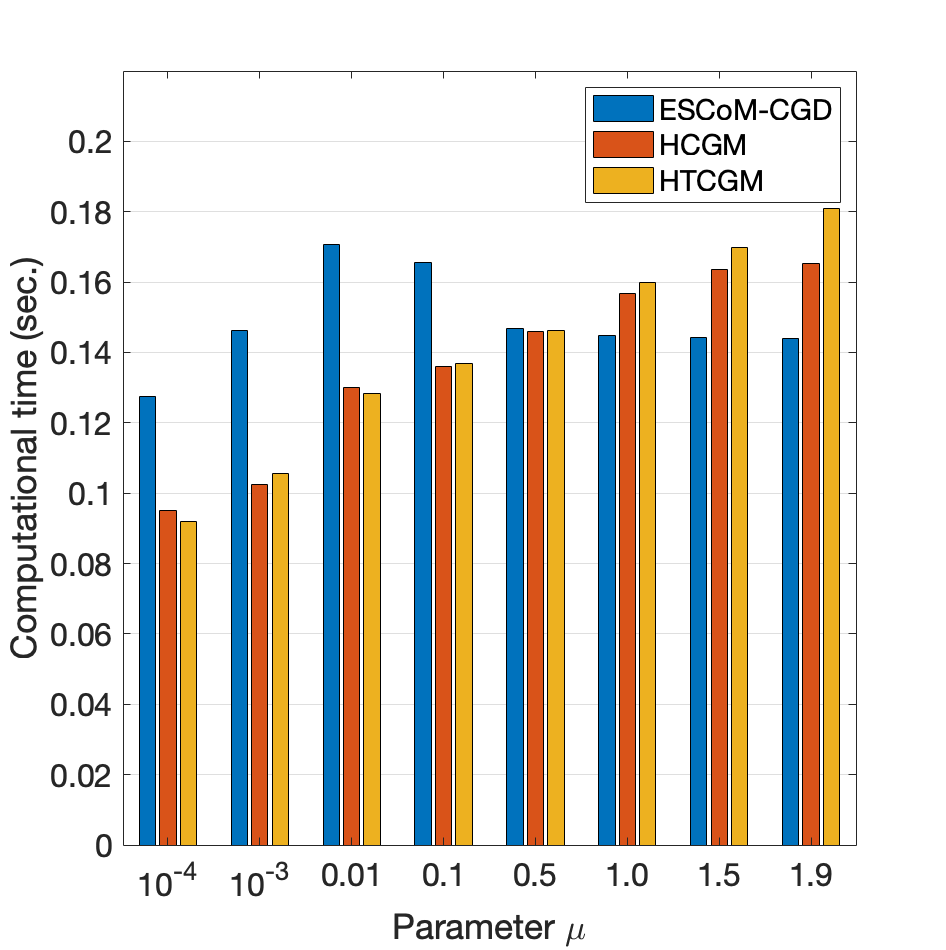}}}
		\caption{Influences of  parameter $\mu$ when performing ESCoM-CGD, HCGM \cite{IY09} and HTCGM \cite{I11}.}
		\label{mu}
	\end{center}
\end{figure}

As it is well-known the the presence of an appropriate relaxation parameter $\lambda_n\in(0,2)$ in MECSPM, or even the state-of-the-art relaxation methods can make the methods converge faster. Now, we demonstrate the influence of the relaxation paramter $\lambda_n$ when ESCoM-CGD is performed for solving the considered problem. We fix $\mu=10^{-4}, \varphi_n=\frac{1}{(n+1)^{0.1}}$, and $\beta_n=\frac{1}{(n+1)^{0.01}}$. We test a set of parameter $\lambda_n\in\{0.1,0.2,\ldots,1.9\}$, and plot the number of iterations and computational time with respect to different choices of $\lambda_n$ in Figure \ref{lambda}.

\begin{figure}[h]
	\begin{center}
		\subfigure{
			\resizebox*{5.5cm}{!}{\includegraphics{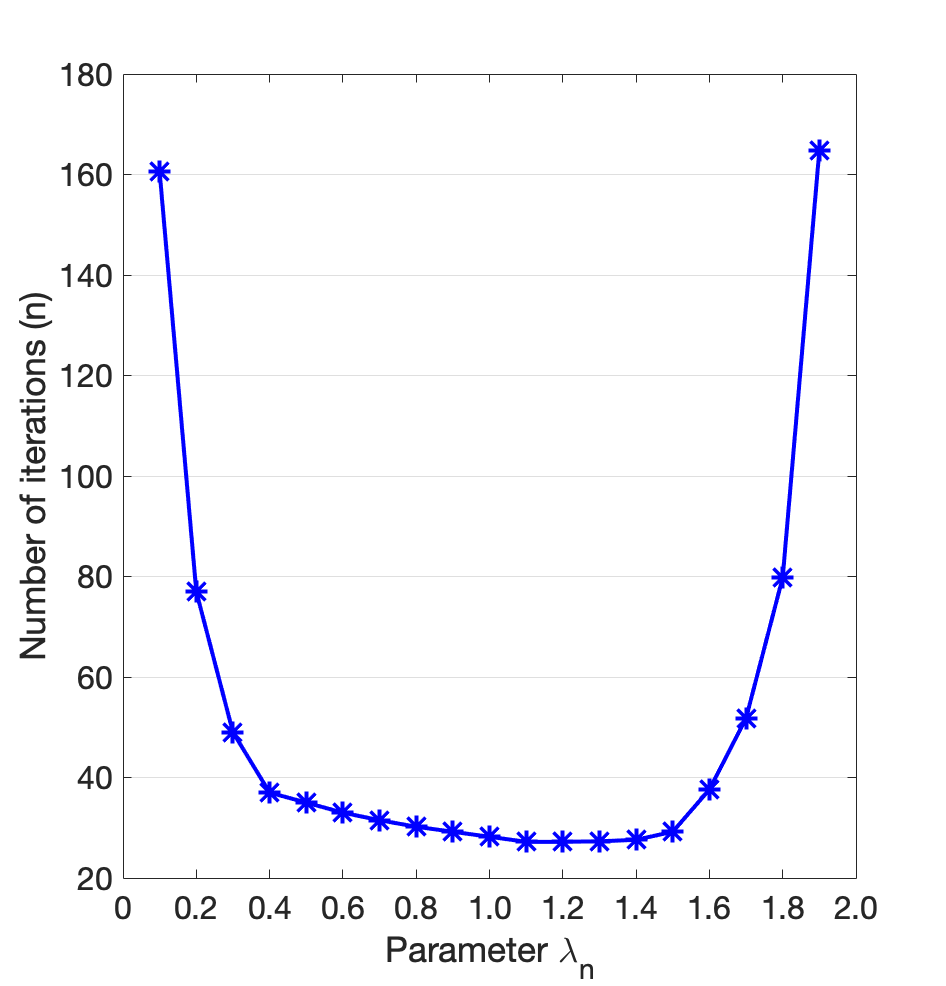}}}
		\subfigure{
			\resizebox*{5.5cm}{!}{\includegraphics{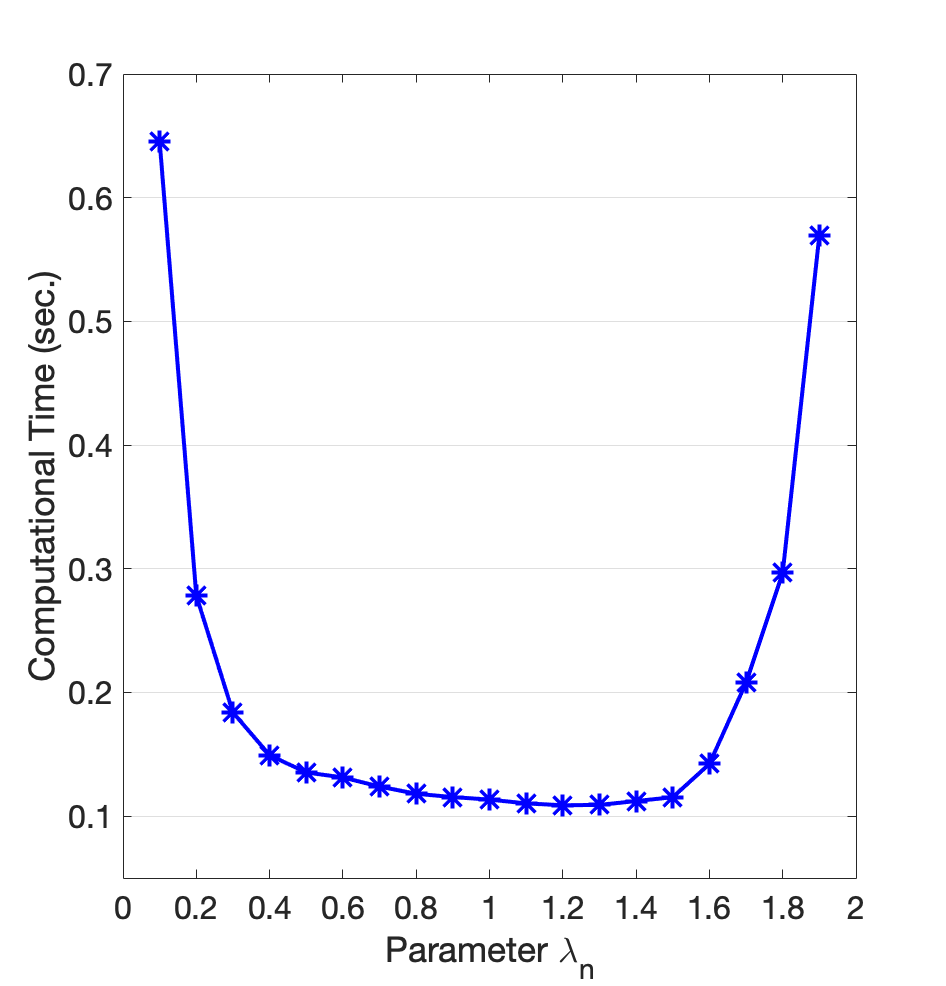}}}
		\caption{Influences of relaxation parameter $\lambda$ when performing ESCoM-CGD.}
		\label{lambda}
	\end{center}
\end{figure}

According to the curves in Figure \ref{lambda}, we see that the relaxation paramter $\lambda_n$ behaves significantly well convergence for a wide range of choices. In fact, we observe the the faster convergence is obtained for some intermediate choices of $\lambda_n\in[0.8,1.5]$, and the exactly best result is observed for $\lambda_n=1.2$. This observation relatively conforms to the numerical experiments in \cite{CN19}.

Finally, to showcase the superiority of our ESCoM-CGD, we compare the methods for various size $(m,k)$ of randomly matrix $\mathbf{A}$. We fix the corresponding parameters  as in Table \ref{para}. To show performance of the methods, the number of iterations with respect to the size of $\mathbf{A}$ are plotted in Figure \ref{various}. Moreover, we also present computational time in seconds with respect to the sizes $(m,k)$ in Table \ref{compare--tb}.

\begin{table}[h]
	\caption{Best choice of parameters used for performing ESCoM-CGD, HCGM \cite{IY09} and HTCGM \cite{I11}.}
	\label{para}       
	\begin{tabular}{l  c c c c c}
		\hline\noalign{\smallskip}
		Parameter& $\varphi_n$ &  $\beta_n$ & $\mu$ & $\lambda_n$  \\
		\noalign{\smallskip}\hline\noalign{\smallskip}
		ESCoM-CGD &$\frac{1}{(n+1)^{0.1}}$ & $\frac{1}{(n+1)^{0.01}}$ & $10^{-4}$ & 1.2\\
		HCGM \cite{IY09} &$\frac{1}{(n+1)^{0.1}}$ & $\frac{1}{(n+1)^{0.5}}$ & $10^{-4}$ & -\\
		HTCGM \cite{I11} &$\frac{1}{(n+1)^{0.1}}$ & $\frac{1}{(n+1)^{0.5}}$ & $10^{-4}$ & -\\
		\noalign{\smallskip}\hline
	\end{tabular}
\end{table}

\begin{figure}[h]
	\begin{center}		
		\resizebox*{12cm}{!}{\includegraphics{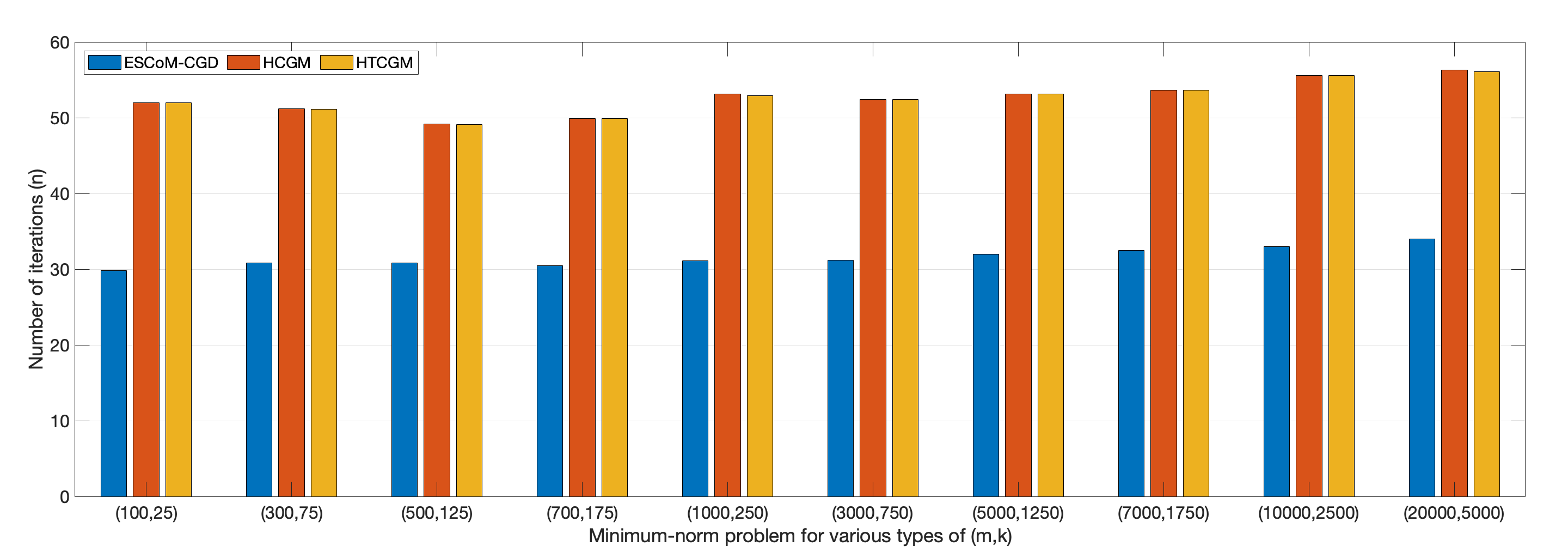}}
		\caption{Number of iterations when performing ESCoM-CGD, HCGM \cite{IY09} and HTCGM \cite{I11} for different choices of number of constraints ($m$) and dimensions ($k$).}
		\label{various}
	\end{center}
\end{figure}

\begin{table}[h]
	\caption{\label{compare--tb}Computational time in seconds when performing ESCoM-CGD, HCGM \cite{IY09} and HTCGM \cite{I11} for different choices of number of constraints ($m$) and dimensions ($k$).}       
	\begin{tabular}{l  r r  r r  r r  r r r  r }
		\hline\noalign{\smallskip}
		$(m,k)$& ESCoM-CGD &  HCGM \cite{IY09} & HTCGM \cite{I11}  \\
		\noalign{\smallskip}\hline\noalign{\smallskip}
			(100,25)&		0.0316&	0.0375&	0.0391\\
		(300,75)&	0.0621&	0.0757&	0.0699\\
		(500,125)&	0.1029&	0.1370&	0.1302\\
		(700,175)&	0.1499&		0.1874&	0.1758\\
		(1000,250)&	0.2045&		0.2717&	0.2557\\
		(3000,750)&	0.9660&	1.2123&	1.1694\\
		(5000,1250)&	2.2990&		3.0666&	 3.0382\\
		(7000,1750)&	6.9761&		9.2777&	9.2878\\
		(10000,2500)&	25.3301&	35.7751& 	35.4769\\
		(20000,5000)&	105.7223&	143.8273&	144.0225 \\
		\noalign{\smallskip}\hline
	\end{tabular}
\end{table}

The plots in Figure \ref{various} show that ESCoM-CGD  gives the best convergence results for all choices of $(m,k)$. Moreover, we see that HCGM  and HTCGM reach the optimal tolerance at most the same number of iterations.
Likewise, the results given in Table \ref{compare--tb} reveal that ESCoM-CGD reaches the optimal tolerance faster than both HCGM and HTCGM. It is worth noting that when the size $(20000,5000)$, ESCoM-CGD requires computational time less than other two methods approximately 40 seconds. This underlines the essential superiority of the proposed ESCoM-CGD.

\section{Conclusion}

The object of this work was the solving of a variational inequality problem governed by a strongly monotone and Lipschitz continuous operator over the intersection of fixed-point sets of cutter operators. We associated to it the so-called extrapolated sequential constraint method with conjugate gradient direction. We proved strong convergence of the generated sequence of iterates to the unique solution to the considered problem. Our numerical experiments show that the proposed method has a better convergence behaviour compared to other two methods.
For future work, one may consider and analyze a variant of the proposed method by using some constrained selections, e.g. the so-called dynamic string averaging procedure, for dealing with the constrained operators.

\begin{acknowledgements}
Mootta Prangprakhon is partially supported by  Science Achievement Scholarship of Thailand  (SAST), and Faculty of Science, Khon Kaen University.  Nimit Nimana is supported by Khon Kaen University.
\end{acknowledgements}

%
%



\end{document}